\documentclass{amsart}

\usepackage[english]{babel}
\usepackage{graphicx}
\usepackage[colorlinks]{hyperref}
\usepackage[dvipsnames]{xcolor}

\usepackage[backgroundcolor=white,linecolor=red,bordercolor=red]{todonotes}
\usepackage[shortlabels]{enumitem}
\setenumerate{label=(\roman*)}

\usepackage{hyperref}
\newcommand\myshade{100}
\hypersetup{
  linkcolor  = RoyalBlue!\myshade!black,
  citecolor  = ForestGreen!\myshade!black,
  urlcolor   = RedOrange!\myshade!black,
  colorlinks = true,
}

\usepackage{amsmath}
\usepackage{amssymb}
\usepackage{amsfonts}
\usepackage{amsthm}
\usepackage{mathtools}
\usepackage{bm}
\usepackage{bbold}
\usepackage{tikz-cd}
\usepackage{thmtools}

\usepackage[capitalise]{cleveref}
\crefformat{equation}{(#2#1#3)}
\crefname{subsection}{Section}{Sections}

\theoremstyle{plain}
\newtheorem{thm}{Theorem}[section]
\newtheorem{lem}[thm]{Lemma}
\newtheorem{prop}[thm]{Proposition}
\newtheorem{cor}[thm]{Corollary}

\theoremstyle{definition}
\newtheorem{defn}[thm]{Definition}
\newtheorem{example}[thm]{Example}

\theoremstyle{remark}
\newtheorem*{rem}{Remark}

\makeatletter
\def\@fnsymbol#1{\ensuremath{\ifcase#1\or \dagger\or \ddagger\or
           \dagger\dagger
           \or \ddagger\ddagger \else\@ctrerr\fi}}
\makeatother

\newcommand\N{\ensuremath{\mathbb{N}}}
\newcommand\R{\ensuremath{\mathbb{R}}}

\usepackage[giveninits=true,style=ieee,sortcites=true]{biblatex}
\usepackage{csquotes}

\newcommand{\one}{\mathbb{1}}
\newcommand{\zero}{\mathbb{0}}
\newcommand{\falg}{$f\!$-algebra}
\newcommand{\fsubalg}{$f\!$-subalgebra}
\newcommand{\FVL}[1]{\operatorname{FVL}(#1)}
\newcommand{\FAFA}[1]{\operatorname{FA{\it f}\!A}(#1)}
\newcommand{\uFAFA}[1]{\operatorname{FA{\it f}\!A}^1(#1)}

\newcommand{\FUCVL}[1]{\operatorname{FUCVL}(#1)}
\newcommand{\FUCFA}[1]{\operatorname{FUC{\it f}\!A}(#1)}
\newcommand{\uFUCFA}[1]{\operatorname{FUC{\it f}\!A}^1(#1)}
\newcommand{\ulim}[3]{#1 \xrightarrow{\rm u} #2\;(#3)}
\newcommand{\VLA}{\operatorname{VLA}}
\newcommand{\Phx}{\Phi_{\bm x}}
\newcommand{\Psx}{\Psi_{\bm x}}

\title[Polynomial growth continuous function calculus]{Lattice-ordered
algebras admitting a polynomial growth continuous function calculus}
\author{David Muñoz-Lahoz}
\address{Instituto de Ciencias Matemáticas\\Universidad Autónoma de
Madrid}
\email{david.munnozl (at) uam (dot) es}
\thanks{Research supported by an FPI–UAM 2023 contract (funded by
Universidad Autónoma de Madrid) and by grants PID2024-162214NB-I00 and
CEX2023-001347-S (funded by MCIN/AEI/10.13039/501100011033).}
\date{\today}
\subjclass[2020]{06F25, 46A40, 46B42, 46E05}
\keywords{lattice-ordered algebra, \falg, polynomial growth continuous
function calculus, uniformly complete}

\begin{document}

\begin{abstract}
We characterize the Archimedean lattice-ordered algebras with identity
that admit a polynomial growth continuous function calculus. More
precisely, for an $n$-tuple $\bm x=(x_1,\dots,x_n)$ in an Archimedean
lattice-ordered algebra $X$ with identity $1_X$, we prove that the
existence of a lattice-algebra homomorphism from the algebra $PG_n$ of
continuous functions on $\mathbb{R}^n$ of polynomial growth, sending
the coordinate projections to $x_1,\dots,x_n$ and the constant
function $\one$ to $1_X$, is equivalent to the existence of $f\ge
1_X\vee |x_1|\vee \cdots \vee |x_n|$ and an \fsubalg\ $Y$ of $X$ such
that $1_X,x_1,\ldots ,x_n \in Y$ and, for every $m \in \N$, the norm
$\|{\cdot }\|_{f^{m}}$ is complete on $Y\cap I_{f^{m}}$. This result
may be viewed as an analogue, for lattice-ordered algebras, of the
characterization of positively homogeneous continuous function
calculus for Archimedean vector lattices due to Laustsen and Troitsky.
As a by-product, we describe the finitely generated free objects in
the category of uniformly complete Archimedean \falg s and also show that
the existence of a nontrivial polynomial growth continuous function
calculus on a vector space forces it to be a commutative \falg .
\end{abstract}

\maketitle

\section{Introduction}

In \cite{laustsen_troitsky2020}, N.\ J.\ Laustsen and V.\ G.\ Troitsky
characterized the Archimedean vector lattices that admit a positively
homogeneous function calculus. More precisely, they showed that if $X$
is an Archimedean vector lattice, and $\bm x=(x_1,\ldots ,x_n) \in
X^{n}$ for some $n \in \N$, then the following are equivalent:
\begin{itemize}
    \item There is a vector lattice homomorphism $\Phi _{\bm x}\colon
        H_n\to X$ with $\Phi _{\bm x}(\pi _i)=x_i$ for $i=1,\ldots
        ,n$, where $H_n$ denotes the vector lattice of positively
        homogeneous continuous functions defined on $\R^{n}$, and $\pi
        _i\colon \R^{n}\to \R$ is the $i$-th coordinate projection.
    \item There exists $f \in X$ such that $f\ge |x_1|\vee \cdots \vee
        |x_n|$ and the norm
        \[
        \|x\|_f=\inf \{\, \lambda \in [0,\infty ) : |x|\le \lambda f
        \, \},
        \]
        defined on the order ideal $I_f$ generated by $f$, is complete
        when restricted to the closed sublattice generated by
        $x_1,\ldots ,x_n$ in $I_f$.
\end{itemize}

The goal of this paper is to characterize, in a similar vein, the
Archimedean lattice-ordered algebras with identity that admit a
polynomial growth continuous function calculus. More precisely, if $X$
is a lattice-ordered algebra with identity $1_X$, denote by
$PG_n$ the lattice-ordered algebra formed by the continuous
functions $f\colon \R^{n}\to \R$ for which there exist $N \in \N$ and
$M \in \R_+$ such that $|f|\le M(\one + |\pi _1| + \cdots +|\pi
_n|)^{N}$ (where $\one\colon \R^{n}\to \R$ denotes the function that
is identically one). Then the following are equivalent:
\begin{itemize}
    \item There is a lattice-algebra homomorphism $\Psi _{\bm x}\colon
        PG_n\to X$ with $\Psi _{\bm x}(\one)=1_X$ and $\Psi _{\bm
        x}(\pi _i)=x_i$ for $i=1,\ldots ,n$.
    \item There exist $f\ge 1_X \vee |x_1| \vee \cdots \vee |x_n|$ and
        an \fsubalg\ $Y$ of $X$ such that $1_X,x_1,\ldots ,x_n \in Y$
        and, for every $m \in \N$, the norm $\|{\cdot }\|_{f^{m}}$ is
        complete on $Y\cap I_{f^{m}}$.
\end{itemize}
Compared to the Archimedean vector lattice setting, the condition at
hand is notably more intricate. The extra algebraic layer (and the
passage to algebra homomorphisms) is not really what drives this added
complexity; the real source is that $PG_n$ has a richer internal
structure than $H_n$. Recall that $H_n$ is an AM-space with unit, a
feature that keeps the corresponding conditions tidy. While $PG_n$
itself fails to be an AM-space with unit, it does decompose as a
countable union of such spaces. Concretely, setting $d = \one \vee
|\pi_1| \vee \cdots \vee |\pi_n|$, one has
  \begin{equation*}
      PG_n = \bigcup_{m \in \N} I_{d^{m}}, \qquad \text{with} \qquad I_{d^{m}} \subseteq I_{d^{m+1}}.
  \end{equation*}
This particular structure will be central in proving that the two
conditions above are equivalent. The appearance of an \fsubalg\ in the
condition is also expected, since \falg s are the only lattice-ordered
algebras admitting a polynomial growth continuous function calculus
(\cref{prop:admitPG}).

As noted in \cite{laustsen_troitsky2020},
positively homogeneous continuous function calculus plays a
fundamental role in the theory of Banach lattices. This form of
function calculus was first introduced by Yudin \cite{yudin1939} and
Krivine \cite{krivine1974} for
Banach lattices, and was subsequently generalized to uniformly
complete Archimedean vector lattices in a seminal paper by Buskes, de
Pagter and van Rooij \cite{buskes_depagter_vanrooij1991}. In that work, the authors deal not only with
uniformly complete Archimedean vector lattices, but also with
uniformly complete Archimedean \falg s. It is in the latter context
that the existence of a polynomial growth continuous function calculus
is established. Hence, just as Laustsen and Troitsky did for Archimedean
vector lattices, it seems relevant to provide a useful characterization
of the Archimedean lattice-ordered algebras that admit such a function
calculus.

\Cref{sec:why} provides two further arguments to justify why
polynomial growth continuous function calculus is the natural function
calculus to consider in lattice-ordered algebras. The first shows that
this function calculus is the natural extension of positively
homogeneous continuous function calculus on vector lattices. The
second relies on viewing function calculus through the lens of free
objects. Although the latter is an approach certainly known and used
by experts, we have not found it in the literature. As part of this
approach, in \cref{thm:fucfa} we characterize the free object
generated by finitely many elements in the category of uniformly
complete Archimedean \falg s.

\Cref{sec:main} contains the main result of this paper,
\cref{thm:conditions}, which provides the equivalent conditions
described at the beginning of this introduction.
As a consequence, we obtain in \cref{cor:conditions} a
characterization of the Archimedean lattice-ordered algebras with
identity admitting a polynomial growth continuous function calculus.
We also use examples to illustrate the different ways in which a
lattice-ordered algebra may fail to admit such a function
calculus.

Finally, in \cref{sec:final}, we prove that if a vector space admits a
sufficiently strong polynomial growth continuous function
calculus (without constants), then it is automatically a commutative \falg. This result is
an analogue of \cite[Theorem 1.4]{laustsen_troitsky2020}.

\subsection{Preliminaries and notation}

For the unexplained terminology regarding vector and Banach lattices,
we refer the reader to the classical monographs
\cite{aliprantis_burkinshaw2006,meyer-nieberg1991}. An accessible
introduction to lattice-ordered algebras and \falg s can be found in
\cite{huijsmans1991}. We will refer to the subspaces of a lattice-ordered algebra that
are at the same time a vector sublattice and a subalgebra as
\emph{sublattice-algebras}. When a sublattice-algebra is itself an
\falg, we will call it an \emph{\fsubalg}.

All the vector lattices considered in this paper are assumed to be
Archimedean. Let $X$ be such a vector lattice. That a net
$(x_\alpha )\subseteq X$ converges uniformly to $x$ with regulator $e
\in X_+$ will be denoted by $\ulim{x_\alpha }{x}{e}$. We denote by
$I_e$ the order ideal generated by $e$, and by $\|{\cdot }\|_e$ the
associated norm. An Archimedean vector lattice is \emph{uniformly
complete} if $(I_e,\|{\cdot }\|_e)$ is norm complete for every $e \in
X_+$; in keeping with the convention above, we assume throughout that
every uniformly complete vector lattice is Archimedean.

\section{Why polynomial growth continuous function
calculus?}\label{sec:why}

The goal of this section is to introduce polynomial growth continuous
function calculus in lattice-ordered algebras with identity and to
justify, in two ways, why it is the natural function calculus to
consider in this category. We also show that every lattice-ordered
algebra having such a function calculus must be an \falg.

\subsection{Polynomial growth continuous function calculus as a
natural extension of positively homogeneous continuous function
calculus}

A real-valued function $f$ defined on a vector space $X$ is said to be
\emph{positively homogeneous} if, for every $x \in X$ and every $\lambda \in
\R_+$, the identity $f(\lambda x)=\lambda f(x)$ holds. For $n \in \N$, denote by $H_n$ the
vector lattice of positively homogeneous continuous functions defined
on $\R^{n}$. It is standard that $H_n$ is lattice isomorphic to
$C([-1,1]^{n})$, the isomorphism being the map that sends a function to its restriction
to $[-1,1]^{n}$. Pulling up the norm from $C([-1,1]^{n})$, $H_n$
becomes a Banach lattice. In particular, $H_n$ is uniformly complete.

For $i \in \{1,\ldots ,n\}$, the $i$-th coordinate projection
\[
\begin{array}{cccc}
\pi _i\colon& \R^{n} & \longrightarrow & \R \\
        & (t_1,\ldots ,t_n) & \longmapsto & t_i \\
\end{array}
\]
belongs to $H_n$. We shall denote the $i$-th coordinate
projection by $\pi _i$ regardless of which $\R^{n}$ is its domain. This
abuse in notation is justified if we think of $\R^{n}$ as the first
$n$ coordinates of $\R^{\N}$, and of $\pi _i$ as the restriction to
$\R^{n}$ of the $i$-th coordinate projection in $\R^{\N}$.

Buskes, de Pagter and van Rooij showed in \cite[Theorem 3.7
]{buskes_depagter_vanrooij1991} that every uniformly complete
Archimedean vector lattice admits a positively homogeneous continuous
function calculus. More precisely, they showed that if $X$ is a
uniformly complete Archimedean vector lattice, and $\bm
x=(x_1,\ldots ,x_n)\in X^{n}$ for some $n \in \N$, then there exists a unique lattice
homomorphism $\Phx\colon H_n\to X$ satisfying $\Phx(\pi _i)=x_i$ for
$i=1,\ldots ,n$.

What is the natural extension of this function calculus to
lattice-ordered algebras? Suppose now that $X$ is an
Archimedean uniformly complete lattice-ordered algebra.
Let $\bm x=(x_1,\ldots ,x_n) \in X^{n}$ and let $\Phx\colon H_n\to X$
be the unique lattice homomorphism introduced above. If we want to do
function calculus in lattice-ordered algebras, the least we should ask
is for the map to be a lattice-algebra homomorphism. But $H_n$ is not an
algebra. Still, we can view $H_n$ as a sublattice of $\R^{\R^{n}}$, the
space of all functions $\R^{n}\to \R$ equipped with pointwise linear
and lattice structures. This space, when also equipped with the pointwise
product, is a lattice-ordered algebra, with identity the constant one
function $\one$. The weakest extension of the positively homogeneous continuous
function calculus to the algebraic setting would be a
lattice-algebra homomorphism $\Psx\colon \VLA(H_n)\to X$, where
$\VLA(H_n)$ is the smallest sublattice-algebra of $\R^{\R^{n}}$
containing $H_n$.
Since $X$ is uniformly complete, we automatically obtain something stronger:
$\Psx$ extends uniquely to a lattice-algebra homomorphism
on the smallest uniformly complete sublattice-algebra of $\R^{\R^{n}}$
that contains $H_n$ (this is proved in \cref{lem:ucfalg}). Thus the
question becomes: can we identify the smallest uniformly complete
sublattice-algebra of $\R^{\R^{n}}$ that contains $H_n$?

A function $f\colon \R^{n}\to
\R$ is said to have \emph{polynomial growth} if there exist $N \in \N$ and $M \in \R_+$ such that
\[
    |f|\le M(\one+|\pi _1|+ \cdots +|\pi _n|)^{N}.
\]
The lattice-ordered algebra formed by the continuous polynomial growth functions in
$\R^{n}$ is denoted by $PG_n$. Consider the elements $f \in PG_n$ for
which the limit
\[
\lim_{t\to 0^{+}}\frac{f(tx)}{t}
\]
exists for every $x \in \R^{n}$, and is uniform on bounded subsets.
In this case, we denote $f_h(x)=\lim_{t\to
0^{+}}f(tx)t^{-1}$ and say that $f_h$ exists. Observe that $f_h \in H_n$,
hence the notation. Denote by $PG_n^{h}$ the set of elements of $PG_n$
for which $f_h$ exists.

\begin{thm}[{\cite[Propositions 3.6 and
    4.9]{buskes_depagter_vanrooij1991}}]\label{thm:ucgenerated}
    Let $n \in \N$.
    \begin{enumerate}
        \item The uniformly complete vector lattice generated by $\pi
            _1,\ldots ,\pi _n$ in $\R^{\R^{n}}$ is $H_n$.
        \item The uniformly complete lattice-ordered algebra generated
            by $\pi _1,\ldots ,\pi _n$ in $\R^{\R^{n}}$ is $PG_n^{h}$.
        \item The uniformly complete lattice-ordered algebra generated
            by $\one,\pi _1,\ldots ,\pi _n$ in $\R^{\R^{n}}$ is $PG_n$.
    \end{enumerate}
\end{thm}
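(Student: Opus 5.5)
For each item I would prove two inclusions. First, the candidate space ($H_n$, $PG_n^h$, or $PG_n$) is a uniformly complete object of the right kind that contains the generators. Second, it lies inside every uniformly complete sub-object of $\R^{\R^n}$ containing the generators. Throughout I take "uniformly complete" intrinsically: for each positive $e$ in the subspace, the principal ideal in the subspace is complete for $\|\cdot\|_e$.

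\emph{Part (i).} $H_n$ is a vector sublattice of $\R^{\R^n}$ containing the $\pi_i$. It is uniformly complete because it is lattice isomorphic to $C([-1,1]^n)$. For minimality, let $V$ be a uniformly complete vector sublattice containing the $\pi_i$. Put $e=|\pi_1|\vee\cdots\vee|\pi_n|$. Restricting to the sphere $\{e=1\}$, the piecewise-linear functions (lattice-linear combinations of the $\pi_i$) separate points and contain constants on this compact set. By the lattice Stone--Weierstrass theorem they are $\|\cdot\|_e$-dense in $I_e\cap H_n$, which is all of $H_n$. Since $V\cap I_e$ is $\|\cdot\|_e$-complete and contains these piecewise-linear functions, it contains their limits. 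Hence $H_n\subseteq V$.

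\emph{Part (iii).} $PG_n$ is clearly a sublattice-algebra containing $\one$ and the $\pi_i$. Set $d=\one\vee|\pi_1|\vee\cdots\vee|\pi_n|$. Then $PG_n=\bigcup_m I_{d^m}$, and $I_{d^m}$ is isometric to $C$ of a compactification of $\R^n$ via $f\mapsto f/d^m$. To show it is uniformly complete, take any $e\in (PG_n)_+$; note $e$ may vanish. A Cauchy sequence in $(I_e,\|\cdot\|_e)$ converges pointwise to some $g$ with $|g|\le Ce$, and the convergence is uniform relative to $e$. Therefore $g$ is continuous at zeros of $e$ and elsewhere, so $g\in PG_n$.

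For minimality, let $A$ be a uniformly complete sublattice-algebra containing $\one,\pi_i$. Then $A$ contains every $d^m$. On the one-point (or sphere) compactification, the functions $p/d^m$, with $p$ in the lattice-algebra generated, form a sublattice containing constants and separating points. By Stone--Weierstrass, for $f\in I_{d^m}$ I approximate $f/d^m$ uniformly by such quotients with $p\in I_{d^m}$, which gives $f$ as a $\|\cdot\|_{d^m}$-limit of elements of $A$. The delicate point is separation at infinity, which needs directions: functions like $\pi_i\cdot d^{m-1}/d^m$ handle this.

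\emph{Part (ii).} Here $PG_n^h$ is handled similarly, but with $e_m=e\,d^{m-1}$, $e=|\pi_1|\vee\cdots\vee|\pi_n|$, which vanishes at $0$. Elements of the generated algebra without constants behave linearly near $0$, so every limit has $f_h$ existing, giving the inclusion into $PG_n^h$. Conversely, $f\in PG_n^h$ lies in some $I_{e_m}$. The existence of $f_h$ uniformly is exactly continuity of $f/e_m$ on a compactification that blows up the origin into a sphere of directions. Stone--Weierstrass on that compact space, with generators $\pi_i d^{k}/e_m$, then gives density.

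I expect the main obstacle to be part (ii). The difficulty is checking that $PG_n^h$ is closed under products and uniformly complete, and above all identifying the right compactification (blown-up origin plus sphere at infinity) on which the generated algebra separates points.
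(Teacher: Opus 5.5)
The paper does not prove this statement; it is quoted from \cite{buskes_depagter_vanrooij1991}, so your argument has to stand on its own. Part (i) is fine. One correction there: it is restriction to the boundary $\{e=1\}$ of the cube that gives a lattice isometry $H_n\cong C(\{e=1\})$. Restriction to the whole cube is not onto $C([-1,1]^n)$, although uniform completeness follows all the same. Your proof that $PG_n$ is uniformly complete is also fine.

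The gap is in the density step of (ii) and (iii). For $f\in I_{d^m}$, the quotient $f/d^m$ is an \emph{arbitrary} element of $C_b(\R^n)$. It need not extend continuously to the one-point or the sphere compactification, and it cannot in general be approximated in $\|\cdot\|_{d^m}$ by elements of the generated lattice-algebra. Concretely, take $n=1$ and $f=d^m\sin(\pi_1)\in I_{d^m}$. Every lattice-polynomial $p$ in $\one,\pi_1$ is piecewise polynomial with finitely many pieces. So if $p\in I_{d^m}$, then $p/d^m$ is eventually a bounded rational function with some limit $L$ at $+\infty$, whence $\|f-p\|_{d^m}\ge\limsup_{t\to\infty}|\sin t-L|\ge 1$.

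Passing to the sphere compactification does not help. For $n\ge 2$, even the approximants are not continuous there. For instance, $q=(\pi_2-\pi_1^2)^+\wedge\one$ lies in the generated algebra and in $I_{\one}\subseteq I_{d^m}$. Yet $q=1$ along $t_2=t_1^2+1$ and $q=0$ along $t_2=t_1^2$, and both curves escape to infinity in the direction $(0,1)$. The only compactification carrying all of $f/d^m$, $f\in I_{d^m}$, is $\beta\R^n$, and there the generated functions are not dense, as the $\sin$ example shows. So ``separation at infinity via directions'' is not the real issue.

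The missing idea is to give up one power of the regulator, after which nothing needs to be separated at infinity. If $f\in I_{d^m}$, use $d^{m+1}\in A$ instead. Then $|f|/d^{m+1}\le c/d\to 0$, so $f/d^{m+1}$ is continuous on $\R^n\cup\{\infty\}$ with value $0$ at $\infty$. Consider the functions $p/d^{m+1}$, where $p$ ranges over the lattice-algebra generated by $\one,\pi_1,\dots,\pi_n$ and $p/d^{m+1}$ converges at $\infty$. They form a vector sublattice of $C(\R^n\cup\{\infty\})$ that contains the constants ($p=d^{m+1}$). It also separates points: $p=\one$ separates $\infty$ from finite points and finite points with different values of $d$, and the truncations $(\pi_i\wedge c\one)\vee(-c\one)$ separate the rest. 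Kakutani--Krein then gives $p_k$ with $\|f-p_k\|_{d^{m+1}}\to 0$, hence $f\in A$.

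For (ii), argue the same way. If $f\in I_{e\,d^m}$, use the regulator $e\,d^{m+1}=e\vee e^{m+2}$, which lies in the lattice-algebra generated by the $\pi_i$ alone. Work on the one-point compactification with the origin blown up into $\{e=1\}$. Your observation that uniform existence of $f_h$ is exactly continuity of $f/e$ at the blown-up origin is correct. Since every generated element lies in $PG_n^h$, the approximants are continuous there too.

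The points you list as obstacles for $PG_n^h$ are routine. For closure under products, $f(tx)\to0$ uniformly on bounded sets. For uniform completeness with a regulator $u\in PG_n^h$, the quotients $u(tx)/t$ stay bounded for small $t$, uniformly on bounded sets.
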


We were looking for the smallest uniformly complete lattice-ordered
algebra in $\R^{\R^{n}}$ containing $H_n$. According to the previous
theorem, this is precisely $PG_n^{h}$.
And the smallest uniformly complete lattice-ordered algebra in
$\R^{\R^{n}}$ containing $H_n$ and $\one$ is $PG_n$. In other words,
the weakest natural extension of the positively homogeneous
continuous function calculus to the algebraic setting is a
lattice-algebra homomorphism $\Psx^{h}\colon PG_n^{h}\to X$ (or a
lattice-algebra homomorphism $\Psx\colon PG_n\to X$ satisfying
$\Psx(\one)=1_X$, in the case that
$X$ has an identity $1_X$).

Inspired by \cite{laustsen_troitsky2020},
the main goal of this paper is to study which unital
lattice-ordered algebras admit a polynomial growth continuous
function calculus, meaning the following:

\begin{defn}
    A lattice-ordered algebra $X$ with identity $1_X$ admits a \emph{polynomial
    growth continuous function calculus} if, for each $n \in \N$ and
    each $\bm x=(x_1,\ldots ,x_n)\in X^{n}$, there is a
    lattice-algebra homomorphism $\Psx\colon PG_n\to X$ satisfying
    \begin{equation}\label{eq:proj_one}
            \Psx(\one)=1_X \text{ and }
\Psx(\pi _i)=x_i\text{ for }i=1,\ldots ,n.
    \end{equation}
\end{defn}

This is the analogue of \cite[Definition 1.1]{laustsen_troitsky2020},
where the authors defined a vector lattice
that admits a positively homogeneous continuous function calculus
as a vector lattice $X$ for which, for every $n \in \N$ and $\bm
x=(x_1,\ldots ,x_n)\in X^{n}$, there is a vector lattice homomorphism
$\Phx\colon H_n\to X$ satisfying $\Phx(\pi _i)=x_i$ for $i=1,\ldots ,n$.

Our focus will be on the case with identity, where we are truly dealing with
all functions of polynomial growth. However, the following is a
natural definition that will be useful at times.

\begin{defn}
    A lattice-ordered algebra $X$ admits a \emph{polynomial
    growth continuous function calculus without constants} if, for each $n \in \N$ and
    each $\bm x=(x_1,\ldots ,x_n)\in X^{n}$, there is a
    lattice-algebra homomorphism $\Psx^{h}\colon PG_n^{h}\to X$ satisfying
    \begin{equation}\label{eq:proj}
    \Psx^{h}(\pi _i)=x_i\quad\text{for }i=1,\ldots ,n.
    \end{equation}
\end{defn}

The following is a routine, but relevant, observation.

\begin{prop}\label{prop:admitPG}
    Let $X$ be a lattice-ordered algebra that admits a polynomial
    growth continuous function calculus without constants. Then $X$ is a commutative
    \falg.
\end{prop}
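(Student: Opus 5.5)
Both required identities are pushed forward from $PG_n^h$ along the calculus homomorphism. Recall that a lattice-ordered algebra $X$ is an \falg\ when $x\wedge y=0$ and $z\ge 0$ imply $zx\wedge y=0$ and $xz\wedge y=0$. In $PG_n^{h}\subseteq\R^{\R^{n}}$ all operations are pointwise, so $PG_n^{h}$ is a commutative \falg. Any relation expressible through finitely many lattice and algebra operations that holds in $PG_n^h$ on the coordinate projections will transfer to $X$.

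\emph{Commutativity.} Fix $x,y\in X$ and apply the hypothesis with $n=2$ and $\bm x=(x,y)$. This gives a lattice-algebra homomorphism $\Psx^{h}\colon PG_2^{h}\to X$ with $\Psx^h(\pi_1)=x$ and $\Psx^h(\pi_2)=y$. Since $\pi_1\pi_2=\pi_2\pi_1$ in $PG_2^{h}$, multiplicativity gives
\[
xy=\Psx^h(\pi_1)\Psx^h(\pi_2)=\Psx^h(\pi_1\pi_2)=\Psx^h(\pi_2\pi_1)=yx .
\]
No membership issue arises, because products of elements of $PG_n^h$ lie in $PG_n^h$ (it is a sublattice-algebra by \cref{thm:ucgenerated}).

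\emph{The \falg\ property.} Let $x,y,z\in X$ with $x\wedge y=0$ and $z\ge0$. By commutativity it suffices to show $zx\wedge y=0$. Take $n=3$ and $\bm x=(x,y,z)$, and set $\Psi=\Psx^h$. The difficulty is that $\pi_1\wedge\pi_2\neq 0$ in $PG_3^{h}$, so the hypotheses cannot be fed in directly. Instead I will use the functions
\[
a=\pi_1^{+}-(\pi_1^{+}\wedge\pi_2^{+}),\qquad b=\pi_2^{+}-(\pi_1^{+}\wedge\pi_2^{+}),\qquad c=|\pi_3| .
\]
These belong to $PG_3^{h}$ because they are built from the $\pi_i$ by lattice operations, and $PG_3^h$ is a vector sublattice. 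Pointwise we have $a\wedge b=0$, and hence $(ca)\wedge b=0$, since wherever $b>0$ we have $a=0$. Mapping under $\Psi$ and using that it is a lattice-algebra homomorphism gives $\Psi(a)=x^{+}-(x^{+}\wedge y^{+})$, $\Psi(b)=y^{+}-(x^{+}\wedge y^{+})$ and $\Psi(c)=|z|$.

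Now $x\wedge y=0$ forces $x,y\ge0$: indeed $x\ge x\wedge y=0$, and likewise for $y$. So $x^{+}\wedge y^{+}=x\wedge y=0$, which gives $\Psi(a)=x$ and $\Psi(b)=y$. Also $z\ge 0$ gives $\Psi(c)=z$. Therefore
\[
zx\wedge y=\Psi(ca)\wedge\Psi(b)=\Psi\bigl((ca)\wedge b\bigr)=\Psi(0)=0 .
\]
Hence $X$ is an \falg, and it is commutative by the first step.

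The only step that needs care is this replacement of $\pi_1,\pi_2$ by disjoint surrogates whose images are exactly $x,y$. It works because the disjointness and positivity hypotheses are themselves lattice identities that the homomorphism respects. No Archimedean or completeness argument is needed.
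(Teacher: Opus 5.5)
Your proof is correct and follows essentially the same argument as the paper: push the pointwise identity $(ca)\wedge b=0$ forward along $\Psi$, where $a,b$ are disjoint surrogates for $\pi_1,\pi_2$ built by lattice operations, and obtain commutativity from $\pi_1\pi_2=\pi_2\pi_1$. The only differences are cosmetic. You use positive parts and deduce $x,y\ge 0$ from $x\wedge y=0$, whereas the paper assumes $x_1,x_2,x_3\in X_+$ and uses absolute values. You also spell out the commutativity step, which the paper dismisses as ``the same idea''.
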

\begin{proof}
    Let $X$ be a lattice-ordered algebra that admits a polynomial
    growth continuous function calculus without constants. Let $x_1,x_2,x_3\in X_+$,
    and suppose $x_1\wedge x_2=0$. Let $\Psi \colon PG_3^{h}\to X$ be a
    lattice-algebra homomorphism such that $\Psi (\pi _i)=x_i$ for
    $i=1,\ldots ,3$. Since $\Psi $ is a lattice-algebra homomorphism:
    \begin{align*}
        \Psi (|\pi _1|-|\pi _1|\wedge |\pi _2|)&=|x_1|-|x_1|\wedge
        |x_2|=x_1,\\
        \Psi (|\pi _2|-|\pi _1|\wedge |\pi _2|)&=|x_2|-|x_1|\wedge
        |x_2|=x_2,\\
        \Psi (|\pi _3|)&=|x_3|=x_3.
    \end{align*}
    Since
    \[
        (|\pi _1|-|\pi _1|\wedge |\pi _2|)\wedge (|\pi
        _2|-|\pi _1|\wedge |\pi _2|)=0
    \]
    and $PG_3^{h}$ is an \falg, it follows that
    \[
        [|\pi _3|(|\pi _1|-|\pi _1|\wedge |\pi _2|)]\wedge (|\pi
        _2|-|\pi _1|\wedge |\pi _2|)=0.
    \]
    Applying $\Psi$ to the previous equality yields $(x_3x_1)\wedge
    x_2=0$. Using the same idea, it is straightforward to check that $X$
    is commutative. Hence $X$ is a commutative \falg.
\end{proof}

The previous proposition can be seen as yet another reason why
\falg s are particularly well-behaved and relevant among all
lattice-ordered algebras.

To close this circle of ideas, let us note that, just like every
Archimedean uniformly complete vector lattice admits a positively
homogeneous continuous function calculus, every Archimedean uniformly
complete \falg\ with identity admits a polynomial growth continuous
function calculus \cite[Theorem
4.12]{buskes_depagter_vanrooij1991}. We shall reprove this in the
next section using a completely different approach.

\subsection{Function calculus through free objects}

In this section we show how to approach function calculus through free
objects. This way of understanding function calculus will be useful in
our later proofs, and provides a unified approach to positively
homogeneous and polynomial growth (without constants) continuous function
calculi. Moreover, it further justifies why polynomial growth
continuous function calculus is the natural function calculus to
consider on \falg s.

Recall that a \emph{free vector lattice} over a non-empty set $S$ is a
pair $(\FVL S,\delta)$, where $\FVL S$ is a vector lattice and $\delta
\colon S\to \FVL S$ is a map, such that, for every vector lattice $X$
and every map $T\colon S\to X$, there exists a unique lattice
homomorphism $\hat{T}\colon \FVL S\to X$ satisfying $\hat{T}\delta =T$.
The existence of this object was first established implicitly in
\cite{birkhoff1942}; it was later constructed and studied in
\cite{baker1968,bleier1973}.

The free vector lattice over a set $S$ is essentially unique and can
be constructed explicitly in the following way: for every $s \in S$, define the point evaluation $\delta _s\colon \R^{S}\to \R$ by
$\delta _s(x)=x(s)$; then the free vector lattice is the sublattice generated by
$\{\delta _s:s \in S\}$ in $\R^{\R^{S}}$ together with the map
$\delta\colon S\to \FVL S$ defined by $\delta (s)=\delta _s$.

Note that, with this particular realization, every element of $\FVL S$ is
continuous in the product topology; hence $\FVL S$ lives inside
$C(\R^{S})$. Moreover, every element of $\FVL S$ is positively
homogeneous; hence $\FVL S$ lives inside $H(\R^{S})$, the vector
lattice of positively homogeneous continuous functions defined on $\R^{S}$.
Finally, restricting the functions from $\R^{S}$ to $\Delta
_S=[-1,1]^{S}$, $\FVL S$ can be regarded as a vector sublattice of
$H(\Delta _S)$ (see \cite[Lemma 5.1]{de_pagter_wickstead2015} for the
details). Note that the latter space, being a closed sublattice
of $C(\Delta _S)$, is a Banach lattice.

One can similarly define the free object in the category of uniformly
complete vector lattices. This notion was first considered in
\cite{emelyanov_gorokhova2024}.

\begin{defn}
    Let $S$ be a non-empty set. A \emph{free uniformly complete vector
    lattice} over $S$ is a pair $(\FUCVL S, \delta)$, where $\FUCVL S$
    is a uniformly complete vector lattice and $\delta \colon S\to
    \FUCVL S$ is a map, such that, for every uniformly complete vector
    lattice $X$ and every map $T\colon S\to X$, there exists a unique
    lattice homomorphism $\hat{T}\colon \FUCVL S\to X$ satisfying
    $T=\hat{T}\circ \delta $.
\end{defn}

In \cite{emelyanov_gorokhova2024} an abstract construction of this
object is provided through the uniform completion (called
r-completion by the authors): a
uniformly complete vector lattice $U$ is a \emph{uniform completion} of a
vector lattice $X$ if there exists a vector lattice embedding $i\colon X\to
U$ such that, for each uniformly complete vector lattice $Y$ and each
lattice homomorphism $T\colon X\to Y$, there exists a unique lattice
homomorphism $S\colon U\to Y$ satisfying $T=S\circ i$. Uniform completions
are unique up to a unique lattice isomorphism. It is known (see
\cite{emelyanov_gorokhova2024} for the details) that, if $X$ is a
vector sublattice of $Y$, and $Y$ is uniformly complete, then the
uniform completion of $X$ is the intersection of all the uniformly
complete vector sublattices of $Y$ containing $X$.

By checking the universal properties, it is clear that $\FUCVL S$
is the uniform completion of $\FVL S$. Since $H(\Delta _S)$ is a
Banach lattice, it is a uniformly complete vector lattice, and
therefore $\FUCVL S$ is the intersection of all the uniformly complete
vector sublattices of $H(\Delta _S)$ that contain $\FVL S$. It follows
from \cref{thm:ucgenerated} that, if $S$ is finite, then $\FUCVL
S=H(\Delta _S)$, which is in turn isomorphic to $H_{|S|}$. Under this
isomorphism, and the identification of $S$ with $\{1,\ldots ,|S|\}$, the $\delta _s$ become the projections $\pi _i$.
(Note that in this case the free uniformly complete vector lattice
coincides with the free Banach lattice over $S$, see
\cite{de_pagter_wickstead2015,aviles_rodriguez_tradacete2018}).

Now we can establish the relation of this theory with positively
homogeneous continuous function calculus. As usual, let $X$ be a uniformly
complete Archimedean vector lattice, let $n \in \N$ and let $\bm
x=(x_1,\ldots ,x_n) \in X^{n}$. The universal property of
$\FUCVL{\{1,\ldots ,n\}}$ asserts that there exists a unique lattice
homomorphism
\[
\FUCVL{\{1,\ldots ,n\}}\to X
\]
sending $\delta _i$ to $x_i$ for $i=1,\ldots ,n$. But, as noted above,
$\FUCVL{\{1,\ldots ,n\}}$ is essentially $H_n$ and, with this
identification, $\delta _i$ becomes $\pi _i$. Hence the above map is
the unique lattice homomorphism $\Phx\colon H_n\to X$ satisfying
$\Phx(\pi _i)=x_i$ for $i=1,\ldots ,n$.

So far we have only revisited well-known facts. However, looking back at this
approach, there is nothing preventing us from applying it to
\falg s. What kind of function calculus will appear in this case? Of
course, it will be the polynomial growth continuous function calculus. The
details follow.

A \emph{free Archimedean \falg} over a non-empty set $S$ is a pair $(\FAFA S,
\delta )$, where $\FAFA S$ is an Archimedean \falg\ and $\delta \colon S\to \FAFA
S$ is a map, such that, for every Archimedean \falg\ $A$ and every map $T\colon
S\to A$, there exists a unique lattice-algebra homomorphism
$\hat{T}\colon \FAFA S\to A$ satisfying $\hat{T}\delta =T$.
Similarly, a \emph{free Archimedean \falg\ with identity} over a non-empty set
$S$ is a pair $(\uFAFA S,
\delta )$, where $\uFAFA S$ is an Archimedean \falg\ with identity and $\delta
\colon S\to \uFAFA
S$ is a map, such that, for every Archimedean \falg\ with identity $A$ and every map $T\colon
S\to A$, there exists a unique lattice-algebra homomorphism
$\hat{T}\colon \uFAFA S\to A$ that sends the identity to the identity and
satisfies $\hat{T}\delta =T$. The
existence of these objects follows from the works
\cite{birkhoff1942,henriksen_isbell1962}.

The free Archimedean \falg\ can be constructed explicitly as
the sublattice-algebra generated by $\{\, \delta _s : s \in S \, \} $
in $\R^{\R^{S}}$. Similarly, the free Archimedean \falg\ with identity can
be identified with the sublattice-algebra generated by $\{\, \delta _s
: s \in S \, \} \cup \{\one\}$. Note that, with this particular
realization, every element of $\uFAFA S$ is continuous in the product
topology; hence $\FAFA S$ and $\uFAFA S$ live inside $C(\R^{S})$. However, the
parallelism with the free vector lattice ends here. The functions of $\FAFA S$
cannot be restricted to $\Delta _S$ injectively since they are not,
in general, positively homogeneous: there exist non-zero elements of
$\FAFA S$ that vanish on $\Delta _S$.

One can still consider the free object in the category of uniformly
complete \falg s.

\begin{defn}
    Let $S$ be a non-empty set. A \emph{free uniformly complete \falg}
    (resp., \emph{free uniformly complete \falg\ with identity})
    over $S$ is a pair $(\FUCFA S, \delta )$ (resp., $(\uFUCFA S,\delta
    )$), where $\FUCFA S$ (resp., $\uFUCFA S$) is a
    uniformly complete \falg\ (resp., \falg\ with identity) and
    $\delta \colon S\to \FUCFA S$ (resp., $\delta \colon S\to \uFUCFA S$) is a
    map, such that, for every uniformly complete \falg\ (resp., \falg\ with
    identity) $A$ and every
    map $T\colon S\to A$, there exists a unique lattice-algebra
    homomorphism $\hat{T}\colon \FUCFA S\to A$ satisfying
    $T=\hat{T}\circ \delta $ (resp., and sending the identity to the
    identity).
\end{defn}

A priori, it is not clear at all whether such an object exists. The
following lemma allows for an abstract construction of the free
uniformly complete \falg\ as the uniform completion of $\FAFA S$.

\begin{lem}\label{lem:ucfalg}
    Let $Y$ be a uniformly complete \falg\ and let $X$ be a
    sublattice-algebra of $Y$. Let $\bar{X}$ be the intersection of
    all the uniformly complete vector sublattices of $Y$ containing
    $X$ (i.e., $\bar{X}$ is the uniform completion of $X$). Then
    \begin{enumerate}
        \item $\bar{X}$ is a sublattice-algebra (and therefore an
            \falg),
        \item for every uniformly complete \falg\ $A$ and every
            lattice-algebra homomorphism $T\colon X\to A$ there exists a
            unique extension of $T$ to a lattice-algebra homomorphism
            $\bar{T}\colon \bar{X}\to A$.
    \end{enumerate}
\end{lem}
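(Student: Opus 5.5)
The plan is to build $\bar X$ from below by a transfinite uniform-closure process, and to run both parts of the lemma by transfinite induction along it. The whole argument rests on one estimate: in an \falg\ we have $|ab|=|a||b|$, and multiplication by a positive element is monotone.

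\textbf{Construction and identification of $\bar X$.}
\begin{enumerate}
\item Put $X_0=X$. For a successor ordinal, let $X_{\alpha+1}$ be the set of all $y\in Y$ for which some sequence $(x_k)\subseteq X_\alpha$ satisfies $\ulim{x_k}{y}{e}$ with a regulator $e$ lying in $(X_\alpha)_+$. At limit ordinals take unions.
\item Let $Z=\bigcup_{\alpha<\omega_1}X_\alpha$.
\item Show $Z$ is uniformly complete. Take a sequence in $Z$ that is uniformly Cauchy with regulator $e\in Z_+$. Countably many elements are involved, so they all lie in a single $X_\alpha$ with $\alpha<\omega_1$. Since $Y$ is uniformly complete, the sequence converges in $Y$ with regulator $e$. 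By construction its limit lies in $X_{\alpha+1}\subseteq Z$.
\item Show every $X_\alpha$ lies in every uniformly complete vector sublattice $W$ of $Y$ containing $X$. Induct on $\alpha$. If $X_\alpha\subseteq W$, a sequence in $X_\alpha$ converging with regulator $e\in X_\alpha$ is $e$-uniformly Cauchy inside $W$. Hence it converges in $W$, and the two limits agree because uniform limits in the Archimedean space $Y$ are unique.
\item Steps (iii) and (iv) together give $Z=\bar X$, provided each $X_\alpha$ is a vector sublattice. That, and the stronger fact that each $X_\alpha$ is a subalgebra, is the content of (i).
\end{enumerate}

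\textbf{Part (i).} I will prove by induction that every $X_\alpha$ is a sublattice-algebra; limit stages are immediate. Suppose $\ulim{x_k}{x}{e}$ and $\ulim{y_k}{y}{e'}$ with $e,e'\in X_\alpha$.
\begin{itemize}
\item Linear and lattice operations pass to the limit with regulator $e+e'$, using $|u\vee v-u'\vee v'|\le|u-u'|+|v-v'|$.
\item For products, write $x_ky_k-xy=(x_k-x)y_k+x(y_k-y)$. Using $|y_k|\le |y|+e'$ for large $k$, we get
\[
|x_ky_k-xy|\le \varepsilon_k\bigl(e(|y|+e')+|x|e'\bigr).
\]
\item The new regulator $e(|y|+e')+|x|e'$ lies in $X_\alpha$, because $X_\alpha$ is already a sublattice-algebra.
\end{itemize}
Then $\bar X=Z$ is a sublattice-algebra of the \falg\ $Y$, hence itself an \falg.

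\textbf{Part (ii).} Extend $T$ stage by stage.
\begin{itemize}
\item \emph{Definition.} For $y\in X_{\alpha+1}$ with $\ulim{x_k}{y}{e}$, positivity of $T$ gives $|Tx_k-Tx_j|\le\varepsilon\, Te$. So $(Tx_k)$ is $Te$-uniformly Cauchy in the uniformly complete space $A$, and we set $\bar T y=\lim Tx_k$.
\item \emph{Well-definedness.} Let $(x'_k)$ be another approximating sequence with regulator $e''$. Then $x_k-x'_k\to0$ with regulator $e+e''$, so $T(x_k-x_k')\to0$ with regulator $T(e+e'')$. Uniqueness of uniform limits in Archimedean $A$ gives the same value.
\item \emph{Homomorphism properties.} Linearity, preservation of lattice operations and multiplicativity pass to the limit by the same estimates as in (i), now carried out in the \falg\ $A$ with the regulators $Te$, $Te'$, $T(\cdot)$.
\item \emph{Uniqueness.} Any lattice-algebra homomorphism is positive, so it preserves relatively uniform limits. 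Two extensions agreeing on $X_\alpha$ therefore agree on $X_{\alpha+1}$, and transfinite induction gives equality on $\bar X$.
\end{itemize}

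The step I expect to be the main obstacle is the bookkeeping in steps (iii)–(iv): regulators must be taken inside the current stage rather than merely in $Y$. Otherwise a uniformly complete sublattice $W$ need not contain the limits, and the identification of $Z$ with $\bar X$ breaks. The product estimate is exactly what guarantees that the required regulators remain inside the stage.
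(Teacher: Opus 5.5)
Your transfinite construction is sound, but the product step of Part (i) fails as written. There $x,y$ lie in $X_{\alpha+1}$, not in $X_\alpha$. So the regulator $e(|y|+e')+|x|e'$ involves $|x|$ and $|y|$, and the induction hypothesis that $X_\alpha$ is a sublattice-algebra says nothing about it. This is exactly the ``regulator must lie in the current stage'' requirement you flagged yourself. It matters twice. Without it you do not get $xy\in X_{\alpha+1}$. In Part (ii), your definition of $\bar T$ only yields $\bar T(xy)=\lim T(x_ky_k)$ when $x_ky_k\to xy$ with a regulator in $X_\alpha$.

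The repair is easy. Choose $N$ with $\varepsilon_k,\varepsilon'_k\le 1$ for $k\ge N$. Then $|x|\le |x_N|+e$ and $|y_k|\le |y_N|+2e'$ for $k\ge N$. Monotonicity of multiplication by positive elements gives
\[
|x_ky_k-xy|\le |x_k-x|\,|y_k|+|x|\,|y_k-y|\le \max(\varepsilon_k,\varepsilon'_k)\bigl(e(|y_N|+2e')+(|x_N|+e)e'\bigr)\qquad (k\ge N),
\]
and the bracket now lies in $(X_\alpha)_+$. With this change the rest of your argument goes through: the cofinality argument in (iii), the comparison with an arbitrary uniformly complete $W$ in (iv), and the stage-by-stage definition, well-definedness, homomorphism properties and uniqueness of $\bar T$ in (ii).

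The paper proceeds quite differently. It quotes (i) from Buskes, de Pagter and van Rooij. For (ii) it takes the lattice-homomorphism extension $\bar T$ from the universal property of the uniform completion, and then gets multiplicativity without any estimates. For $x\in X_+$, the maps $L_x|_{\bar X}$ and $L_{Tx}$ are lattice homomorphisms because $Y$ and $A$ are \falg s. Hence $\bar T\circ L_x|_{\bar X}$ and $L_{Tx}\circ\bar T$ are two lattice-homomorphism extensions of the same map on $X$, so they coincide. A second pass with $y\in\bar X_+$, using commutativity, finishes the proof. That argument is short but leans on outside results: the description of the uniform completion as the intersection, its universal property, and the cited closure of $\bar X$ under products.

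Your route is self-contained. It builds $\bar X$ by the same transfinite iteration the paper only uses later in \cref{lem:prelim4}, proves (i) and (ii) in a single induction, and never needs commutativity. The price is exactly the regulator bookkeeping where the slip occurred.
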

\begin{proof}
    Item \textit{(i)} is \cite[Lemma 2.1]{buskes_depagter_vanrooij1991}.
    To show \textit{(ii)}, let $A$ be a uniformly complete \falg\ and
    let $T\colon X\to A$ be a lattice-algebra homomorphism. Since $\bar{X}$ is
    the uniform completion of $X$, $T$ extends to a unique lattice
    homomorphism $\bar{T}\colon \bar{X}\to A$. The goal is to show
    that $\bar{T}$ is an algebra homomorphism. Let $x \in Y_+$. Since
    $Y$ is an \falg, the left multiplication operator $L_x\colon Y\to
    Y$, $L_x(y)=xy$, is a lattice homomorphism. If $x \in X_+$, then
    we can consider its restriction $L_x|_{X}\colon X\to X$. Since
    $T$ is an algebra homomorphism, $T\circ L_x|_X=L_{Tx}\circ T$. All
    the operators in this identity are lattice homomorphisms. Note
    that $L_x|_{\bar{X}}\colon \bar{X}\to \bar{X}$ by item
    \textit{(i)}, and that $\bar{T}\circ L_x|_{\bar{X}}$ is a lattice
    homomorphism that coincides with $T\circ L_x|_{X}$ on $X$. By
    uniqueness of the extensions, $\bar{T}\circ L_x|_{\bar{X}}$ is the
    extension of $T\circ L_x|_X$ to $\bar{X}$. Similarly,
    $L_{Tx}\circ \bar{T}$ is the extension of
    $L_{Tx}\circ T$ to $\bar{X}$. From $T\circ L_x|_X=L_{Tx}\circ T$,
    it follows that $\bar{T}\circ L_x|_{\bar{X}}=L_{Tx}\circ \bar{T}$. Hence, for
    every $x \in X_+$ and $y \in \bar{X}$,
    \[
    \bar{T}(xy)=(\bar{T}\circ L_x|_{\bar{X}})(y)=(L_{Tx}\circ
    \bar{T})(y)=Tx\, \bar{T}y.
    \]
    By linearity, this identity holds for all $x \in X$. Since the
    product is commutative, we can rewrite the previous equality as
    $\bar{T}\circ L_y|_X=L_{\bar{T}y}\circ T$, where $L_y|_X\colon
    X\to \bar{X}$. If $y \in (\bar{X})_+$, then $L_y|_X$ and
    $L_{\bar{T}y}$ are lattice homomorphisms, and by uniqueness of the
    extensions $\bar{T}\circ L_y|_{\bar{X}}=L_{\bar{T}y}\circ
    \bar{T}$. By linearity, this last equation holds for all $y \in
    \bar{X}$. Hence $\bar{T}$ is an algebra homomorphism.
\end{proof}

The previous lemma implies that $\FUCFA S$ (resp.\ $\uFUCFA S$) does
exist: it is the uniform completion of $\FAFA S$ (resp.\ $\uFAFA S$).
The following result identifies these free objects when $S$ is finite.
It follows immediately from the previous observation and
\cref{thm:ucgenerated}.

\begin{thm}\label{thm:fucfa}
    Let $n \in \N$ and let $S=\{1,\ldots ,n\}$.
    \begin{enumerate}
        \item The free uniformly complete \falg\ over
            $S$ is $PG_n^{h}$ together with the map $\delta \colon S\to
            PG_n^{h}$ given by $\delta (i)=\pi _i$ for $i=1,\ldots ,n$.
        \item The free uniformly complete \falg\ with identity over
            $S$ is $PG_n$ together with the map $\delta \colon S\to
            PG_n$ given by $\delta (i)=\pi _i$ for $i=1,\ldots ,n$.
    \end{enumerate}
\end{thm}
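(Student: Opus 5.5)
The plan is to combine \cref{lem:ucfalg} with \cref{thm:ucgenerated}, treating the two items in parallel. For item (i), realize $\FAFA S$ concretely as the sublattice-algebra $X$ generated by $\delta_1,\dots,\delta_n$ inside $\R^{\R^{n}}$; under the identification $\R^{S}=\R^{n}$ the point evaluations $\delta_i$ are exactly the projections $\pi_i$. The ambient space $Y=\R^{\R^{n}}$ with pointwise operations is a uniformly complete \falg: it is Archimedean, uniform convergence relative to a regulator $e$ implies pointwise convergence with a controlled rate, and pointwise limits give completeness of each $(I_e,\|{\cdot}\|_e)$. Hence \cref{lem:ucfalg} applies. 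By item (i) of the lemma, $\bar X$ is a uniformly complete sublattice-algebra of $Y$ containing $\pi_1,\dots,\pi_n$. Conversely, any uniformly complete lattice-ordered subalgebra containing the $\pi_i$ contains $X$, and in particular is a uniformly complete vector sublattice containing $X$. Therefore $\bar X$ coincides with the uniformly complete lattice-ordered algebra generated by the $\pi_i$, which is $PG_n^{h}$ by \cref{thm:ucgenerated}(ii).

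Next I would verify the universal property. Let $A$ be a uniformly complete \falg\ and $T\colon S\to A$ a map. Since $A$ is in particular an Archimedean \falg, the freeness of $\FAFA S$ gives a unique lattice-algebra homomorphism $\hat T\colon X\to A$ with $\hat T(\pi_i)=T(i)$. \Cref{lem:ucfalg}(ii) then extends $\hat T$ uniquely to a lattice-algebra homomorphism $PG_n^{h}\to A$.

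Uniqueness of the final map needs a separate check. Any lattice-algebra homomorphism $PG_n^{h}\to A$ sending each $\pi_i$ to $T(i)$ restricts to a lattice-algebra homomorphism on $X$ that agrees with $\hat T$ on the generators, so the restriction equals $\hat T$. By the uniqueness clause of \cref{lem:ucfalg}(ii), it then equals the extension constructed above.

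Item (ii) follows the same argument. Take $X=\uFAFA S$, the sublattice-algebra generated by $\one,\pi_1,\dots,\pi_n$, and use \cref{thm:ucgenerated}(iii) to identify $\bar X=PG_n$. Any uniformly complete \falg\ $A$ with identity receives a unique unital lattice-algebra homomorphism from $X$, and this extends uniquely to $PG_n$. The extension still sends $\one$ to $1_A$, since $\one\in X$. Uniqueness among unital homomorphisms is argued as before.

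The main obstacle I expect is not the formal universal-property chase but the identification of $\bar X$, which is the intersection of uniformly complete \emph{vector sublattices}, with the uniformly complete \emph{lattice-ordered algebra} generated by the $\pi_i$ as meant in \cref{thm:ucgenerated}. This is exactly where \cref{lem:ucfalg}(i) is needed: it guarantees that the minimal uniformly complete vector sublattice is automatically closed under multiplication. I would also double-check that the uniform completeness notion used in \cref{thm:ucgenerated} (inside $\R^{\R^{n}}$) matches relative uniform completeness of the sublattice itself. This holds because relatively uniformly Cauchy sequences in a sublattice with regulator in the sublattice converge in $Y$, and the limit stays in any uniformly complete sublattice.
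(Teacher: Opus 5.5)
Your proposal is correct and takes the same route as the paper. The paper uses \cref{lem:ucfalg} to obtain the free uniformly complete (unital) \falg\ as the uniform completion of $\FAFA S$ (resp.\ $\uFAFA S$) inside $\R^{\R^{n}}$, and then identifies it as $PG_n^{h}$ (resp.\ $PG_n$) via \cref{thm:ucgenerated}; you simply write out the universal-property and uniqueness arguments, and the identification of $\bar X$, which the paper calls immediate.
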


It is now time to revisit function calculus through the lens of
free objects. Let $X$ be a uniformly complete Archimedean \falg, let
$n \in \N$ and let $\bm x = (x_1,\ldots ,x_n) \in X^{n}$. The
universal property of $\FUCFA{\{1,\ldots ,n\}}$ asserts that there
exists a unique lattice-algebra homomorphism
\[
\FUCFA{\{1,\ldots ,n\}}\to X
\]
sending $\delta _i$ to $x_i$ for $i=1,\ldots ,n$. By previous
proposition, $\FUCFA{\{1,\ldots ,n\}}$ can be identified with
$PG_n^{h}$, and with this identification $\delta _i$ becomes $\pi _i$.
Hence the above map is the unique lattice-algebra homomorphism
$\Psx^{h}\colon PG_n^{h}\to X$ satisfying $\Psx^{h}(\pi _i)=x_i$ for
$i=1,\ldots ,n$. In the case that $X$ has identity $1_X$, this map can be
extended to a lattice-algebra homomorphism $\Psx\colon PG_n\to X$
satisfying also $\Psx(\one)=1_X$.

Putting these observations together with \cref{prop:admitPG}, we have the
following characterization of the Archimedean uniformly complete
lattice-ordered algebras that admit a polynomial growth
continuous function calculus (without constants).

\begin{prop}\label{prop:ucPG}
    Let $X$ be an Archimedean uniformly complete lattice-ordered
    algebra.
    \begin{enumerate}
        \item $X$ admits a polynomial growth continuous function
            calculus without constants if and only if $X$ is an \falg.
        \item $X$ admits a polynomial growth continuous function
            calculus if and only if $X$ is an \falg\ with identity.
    \end{enumerate}
\end{prop}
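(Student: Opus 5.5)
For both items, the forward implications follow directly from \cref{prop:admitPG}, and the converses follow from \cref{thm:fucfa} through the universal property of the free objects.

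\emph{Item (i).} Suppose $X$ admits a polynomial growth continuous function calculus without constants. Then \cref{prop:admitPG} says $X$ is a (commutative) \falg. Conversely, let $X$ be a uniformly complete Archimedean \falg, and fix $n\in\N$ and $\bm x=(x_1,\ldots,x_n)\in X^{n}$. Define $T\colon\{1,\ldots,n\}\to X$ by $T(i)=x_i$. By \cref{thm:fucfa}(i), $PG_n^{h}$ with $\delta(i)=\pi_i$ is the free uniformly complete \falg\ over $\{1,\ldots,n\}$. Its universal property gives a lattice-algebra homomorphism $\Psx^{h}\colon PG_n^{h}\to X$ with $\Psx^{h}(\pi_i)=x_i$, which is exactly \cref{eq:proj}. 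This argument uses that \falg s are automatically commutative in the Archimedean case. That fact underlies \cref{lem:ucfalg} and hence \cref{thm:fucfa}, but the statement itself needs nothing extra.

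\emph{Item (ii).} Suppose $X$ admits a polynomial growth continuous function calculus. By definition $X$ has an identity $1_X$. We now check that it admits a calculus without constants, so that \cref{prop:admitPG} applies. Given $\bm x$, restrict the homomorphism $\Psx\colon PG_n\to X$ to $PG_n^{h}\subseteq PG_n$. Since $PG_n^{h}$ is a sublattice-algebra of $PG_n$ (\cref{thm:ucgenerated}), the restriction is a lattice-algebra homomorphism that still sends $\pi_i$ to $x_i$. Hence $X$ is an \falg\ with identity.

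Conversely, let $X$ be a uniformly complete \falg\ with identity $1_X$, and fix $\bm x$. Apply \cref{thm:fucfa}(ii) with $A=X$ and $T(i)=x_i$. This yields a lattice-algebra homomorphism $\Psx\colon PG_n\to X$ with $\Psx(\one)=1_X$ and $\Psx(\pi_i)=x_i$, which is \cref{eq:proj_one}.

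Nothing in these steps is a genuine obstacle, because all the substance sits in \cref{thm:fucfa} (hence \cref{lem:ucfalg} and \cref{thm:ucgenerated}). The point needing most care is the converse of (ii). There the universal property of $\uFUCFA{S}$ is only available for targets that are uniformly complete \falg s with identity, and it produces a homomorphism preserving the identity. One has to confirm that \cref{thm:fucfa}(ii) is stated for exactly this class and that the identity of $PG_n$ is $\one$. Both hold by the definitions.
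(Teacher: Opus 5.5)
Your proposal is correct and follows the paper's argument. The forward implications come from \cref{prop:admitPG}; for (ii) this goes through restricting $\Psx$ to the sublattice-algebra $PG_n^{h}$, a step the paper leaves implicit. The converses come from the universal properties in \cref{thm:fucfa}. The only cosmetic difference is that you apply \cref{thm:fucfa}(ii) directly in the unital case, whereas the paper phrases this as extending $\Psx^{h}$ from $PG_n^{h}$ to $PG_n$ with $\one\mapsto 1_X$.
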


What about general Archimedean lattice-ordered algebras? Can we
characterize those that admit a polynomial growth continuous function
calculus? The
remainder of this paper is devoted to obtaining results analogous to
those in \cite{laustsen_troitsky2020} for Archimedean lattice-ordered
algebras with identity.

\section{Necessary and sufficient conditions to admit a polynomial
growth continuous function calculus}\label{sec:main}

The goal of this section is to prove the following:

\begin{thm}\label{thm:conditions}
    Let $X$ be an Archimedean lattice-ordered algebra with identity $1_X$,
    and let $\bm x = (x_1,\ldots ,x_n)\in X^{n}$, for some $n \in \N$.
    The following are equivalent:
    \begin{enumerate}
    \item There is a lattice-algebra homomorphism $\Psx\colon
        PG_n\to X$ with $\Psx(\one)=1_X$ and $\Psx(\pi _i)=x_i$
        for $i=1,\ldots ,n$.
    \item Let $e=1_X\vee |x_1|\vee \cdots \vee |x_n|$. There exists an
        $f\!$-subalgebra $Y$ of $X$ such that $1_X,x_1,\ldots ,x_n \in
        Y$ and, for every $m \in \N$, the norm $\|{\cdot }\|_{e^{m}}$
        is complete on $Y\cap I_{e^{m}}$.
    \item There exist $f\ge 1_X\vee |x_1|\vee \cdots \vee |x_n|$ and an
        $f\!$-subalgebra $Y$ of $X$ such that $1_X,x_1,\ldots ,x_n \in
        Y$ and, for every $m \in \N$, the norm $\|{\cdot }\|_{f^{m}}$
        is complete on $Y\cap I_{f^{m}}$.
    \end{enumerate}
    When one and hence all three of these conditions are satisfied,
    the lattice-algebra homomorphism $\Psx$ satisfying the conditions
    in \textit{(i)} is unique.
\end{thm}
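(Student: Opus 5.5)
We prove the cycle (i)$\Rightarrow$(ii)$\Rightarrow$(iii)$\Rightarrow$(i), and then uniqueness. The implication (ii)$\Rightarrow$(iii) is trivial: take $f=e$.

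\textbf{(i)$\Rightarrow$(ii).} Set $Y=\Psx(PG_n)$. This is a sublattice-algebra of $X$, and it is an \fsubalg\ because it is a homomorphic image of the \falg\ $PG_n$; being an \falg\ is expressed by identities preserved under lattice-algebra homomorphisms onto the image. Let $d=\one\vee|\pi_1|\vee\cdots\vee|\pi_n|$, so that $\Psx(d^m)=e^m$. Given $y\in Y\cap I_{e^m}$, write $y=\Psx(g)$. We then replace $g$ by its truncation $g'=(g\wedge \lambda d^m)\vee(-\lambda d^m)$ with $\lambda=\|y\|_{e^m}$. Since $\Psx$ is a lattice homomorphism, $\Psx(g')=y$, and moreover $\|g'\|_{d^m}=\|y\|_{e^m}$ exactly (the inequality $\ge$ holds because $\Psx$ is positive). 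Thus $\Psx$ maps $(I_{d^m},\|\cdot\|_{d^m})$ onto $(Y\cap I_{e^m},\|\cdot\|_{e^m})$ as a quotient map. Now $I_{d^m}$ is complete, since it is isometric to $C$ of the one-point compactification via $g\mapsto g/d^m$. A quotient of a Banach space by the kernel is complete, provided the kernel is closed; and the kernel is closed because $\|\Psx g\|_{e^m}\le\|g\|_{d^m}$. Hence $Y\cap I_{e^m}$ is complete.

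\textbf{(iii)$\Rightarrow$(i).} We want to reduce to \cref{prop:ucPG}, but $Y$ need not be uniformly complete. Instead we build $\Psx$ directly.

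The first step is to define the map on polynomials. The sublattice-algebra $\uFAFA{\{1,\dots,n\}}\subseteq PG_n$ maps into the Archimedean \falg\ $Y$ by the universal property, producing $\psi$ with $\psi(\one)=1_X$ and $\psi(\pi_i)=x_i$. Choose a representation of $\uFAFA{}$ as the sublattice-algebra generated by $\one,\pi_i$.

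The second step is a continuity estimate. From $|\pi_i|\le d$ and the fact that $\psi$ is a lattice-algebra homomorphism, we get $\psi(d)=e\le f$. For $g\in\uFAFA{}$ with $|g|\le\lambda d^m$, we then have $|\psi g|\le\lambda e^m\le\lambda f^m$ (using $e\ge 1_X$ and $f\ge e$, so $e^m\le f^m$ in an \falg). Hence $\psi\colon(\uFAFA{}\cap I_{d^m},\|\cdot\|_{d^m})\to(Y\cap I_{f^m},\|\cdot\|_{f^m})$ is contractive.

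The third step is density, which is the main obstacle: we need $\uFAFA{}\cap I_{d^m}$ to be $\|\cdot\|_{d^m}$-dense in $I_{d^m}\cap PG_n$, or at least dense in a piece exhausting $PG_n$. From \cref{thm:ucgenerated}(iii), $PG_n$ is the uniform completion, but "uniform closure" could require iterated limits with varying regulators. I would instead argue directly by Stone--Weierstrass. Write $g\in I_{d^m}$ as $g=d^{m}\cdot h$, where $h=g/d^m$ extends continuously to the one-point compactification? That fails in general, so instead I use $I_{d^{m}}\subseteq I_{d^{m+1}}$. Functions of the form $g/d^{m+1}$ with $g\in\uFAFA{}$ and $|g|\le \lambda d^m$ vanish at infinity, and they form a lattice containing enough functions. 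Lattice Stone--Weierstrass on $\R^n\cup\{\infty\}$ then shows density of $\uFAFA{}\cap I_{d^{m+1}}$ in $I_{d^m}$ for the norm $\|\cdot\|_{d^{m+1}}$, which suffices. The point-separation requirement can be checked using functions like $(\pi_i\wedge c d)\vee(-cd)$ and $\one\wedge\cdots$.

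The fourth step extends by completeness: $\psi$ extends to $\Psx$ on each $I_{d^m}$ into $Y\cap I_{f^{m+1}}$, and these extensions are compatible. Lattice and multiplicative identities pass to limits, since the operations are norm continuous, with $\|ab\|_{f^{k+l}}\le\|a\|_{f^k}\|b\|_{f^l}$ in an \falg.

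\textbf{Uniqueness.} Any $\Psx$ agrees with $\psi$ on $\uFAFA{}$ by freeness. Any $\Psx$ satisfies the contractive estimate above, being positive with $\Psx(d)=e$. By the density of the third step and the Archimedean property of $X$ (which makes $\|\cdot\|_{e^{m+1}}$-limits unique), $\Psx$ is determined.
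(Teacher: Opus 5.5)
Your argument is correct. For (iii)$\Rightarrow$(i) and for uniqueness, however, it takes a genuinely different route from the paper.

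\textbf{(i)$\Rightarrow$(ii).} This is essentially the paper's argument. Your quotient-norm argument replaces the series argument of \cref{lem:prelim1}, but it relies on the same truncation $(g\wedge\lambda d^m)\vee(-\lambda d^m)$. One statement there is wrong: $g\mapsto g/d^m$ identifies $I_{d^m}$ with $C_b(\R^n)$, not with $C(\R^n\cup\{\infty\})$. For example, $d^m\sin\pi_1\in I_{d^m}$, as you yourself note later. Completeness is unaffected.

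\textbf{How the paper handles (iii)$\Rightarrow$(i).} The paper removes your obstacle (that $Y$ need not be uniformly complete) by shrinking $Y$. By \cref{lem:prelim3}, $Y'=Y\cap IA_f=\bigcup_m Y\cap I_{f^m}$. This is an increasing union of ideals of $Y'$, each of which is a Banach lattice by hypothesis. So $Y'$ is a uniformly complete \falg\ with identity by \cref{lem:prelim2}. Then \cref{prop:ucPG}, which ultimately rests on the Buskes--de Pagter--van Rooij result \cref{thm:ucgenerated}, produces $\Psx$. Uniqueness is then derived from the transfinite description of the uniform closure (\cref{lem:prelim4}) together with freeness of $PG_n$.

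\textbf{Your route.} You prove directly that $\uFAFA{\{1,\dots,n\}}\cap I_{d^m}$ is $\|\cdot\|_{d^{m+1}}$-dense in $I_{d^m}$, and extend $\psi$ by continuity into the Banach spaces $Y\cap I_{f^{m+1}}$. This is valid. For $m\ge 1$, the functions $\one/d^{m+1}$ and $\pi_i/d^{m+1}$ already satisfy the separation hypotheses of the lattice Stone--Weierstrass theorem on $C_0(\R^n)$: the relevant determinant is $(t_i-s_i)/(d(s)d(t))^{m+1}$. The truncations you mention are therefore unnecessary.

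\textbf{What each approach buys.} Yours is self-contained. It effectively reproves \cref{thm:ucgenerated}(iii) in a sharper one-step form, with relatively uniform limits and a single regulator $d^{m+1}$. Your uniqueness proof becomes a short continuity argument that needs neither \cref{lem:prelim4} nor free objects.

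The price is bookkeeping that the paper avoids:
\begin{enumerate}
\item You must check that the extensions obtained with different exponents agree. This uses $\|\cdot\|_{d^{k+1}}\le\|\cdot\|_{d^k}$ on $I_{d^k}$ and $f^k\le f^{k+1}$, which holds because $f\ge 1_X$ and $f\ge 0$.
\item For multiplicativity, products $g^1_kg^2_k$ of approximants converge to $h_1h_2$ in $\|\cdot\|_{d^{2m+2}}$ rather than in $\|\cdot\|_{d^{2m+1}}$. So the exponent-independence from the first point is genuinely needed there.
\end{enumerate}

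The paper's route is shorter and explains conceptually why condition (iii) suffices: $Y\cap IA_f$ is a uniformly complete \falg. Yours gives an explicit approximation scheme for $\Psx$.
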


Compare this result with its analogue for positively homogeneous
continuous function calculus \cite[Proposition
2.2]{laustsen_troitsky2020}. We can recast the latter result to make
the analogy with \cref{thm:conditions} clearer.

\begin{prop}
    Let $X$ be an Archimedean vector lattice, and let $\bm
    x=(x_1,\ldots ,x_n) \in X^{n}$ for some $n \in \N$. The
    following are equivalent:
    \begin{enumerate}
        \item There is a vector lattice homomorphism $\Phx\colon
            H_n\to X$ with $\Phx(\pi _i)=x_i$ for $i=1,\ldots ,n$.
        \item Let $e=|x_1|\vee \cdots \vee |x_n|$. There exists a
            sublattice $Y$ of $X$ such that $x_1,\ldots ,x_n \in Y$
            and the norm $\|{\cdot
            }\|_e$ is complete on $Y\cap I_e$.
        \item There exists $f\ge |x_1|\vee \cdots
            \vee |x_n|$ and a sublattice $Y$ of $X$ such that
            $x_1,\ldots ,x_n \in Y$ and the norm
            $\|{\cdot }\|_f$ is complete on $Y\cap I_f$.
    \end{enumerate}
    When one and hence all three of these conditions are satisfied,
    the lattice homomorphism $\Phx$ satisfying the conditions
    in \textit{(i)} is unique.
\end{prop}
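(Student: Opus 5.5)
The plan is to prove the cycle \textit{(i)}$\Rightarrow$\textit{(ii)}$\Rightarrow$\textit{(iii)}$\Rightarrow$\textit{(i)} and then handle uniqueness by a density argument. Throughout, let $d=|\pi_1|\vee\cdots\vee|\pi_n|\in H_n$. Recall that $H_n\cong C([-1,1]^{n})$ via restriction. Since $d$ is strictly positive on $[-1,1]^{n}\setminus\{0\}$ and every $g\in H_n$ is positively homogeneous, $H_n=I_d$. Moreover, $\|g\|_d=\sup\{|g(t)| : d(t)=1\}$, so $(H_n,\|\cdot\|_d)$ is an AM-space with unit $d$ and, in particular, is complete.

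The implication \textit{(ii)}$\Rightarrow$\textit{(iii)} is trivial: take $f=e$.

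For \textit{(iii)}$\Rightarrow$\textit{(i)}, note first that $Y\cap I_f$ is a sublattice of $Y$. By hypothesis it is a Banach lattice under $\|\cdot\|_f$, and it contains $x_1,\dots,x_n$ because $|x_i|\le f$. Let $Z$ be the $\|\cdot\|_f$-closure in $Y\cap I_f$ of the sublattice generated by $x_1,\dots,x_n$. Then $Z$ is a Banach lattice and hence uniformly complete. By \cite[Theorem 3.7]{buskes_depagter_vanrooij1991} (equivalently, the universal property of $\FUCVL{\{1,\dots,n\}}\cong H_n$), there is a lattice homomorphism $H_n\to Z$ sending $\pi_i\mapsto x_i$. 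Composing with the inclusion $Z\hookrightarrow X$, which is a lattice homomorphism, gives $\Phx$.

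For \textit{(i)}$\Rightarrow$\textit{(ii)}, take $Y=\Phx(H_n)$, a sublattice containing the $x_i$. Since $\Phx(d)=e$ and $|g|\le\|g\|_d\, d$ for every $g\in H_n$, we get $Y\subseteq I_e$, so $Y\cap I_e=Y$. It remains to show that $\|\cdot\|_e$ is complete on $Y$, and this is the step I expect to be the main obstacle.

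My approach is to identify $\|\Phx g\|_e$ with the quotient norm of $g$ modulo $J=\ker\Phx$. The kernel $J$ is an order ideal. Because $X$ is Archimedean, $J$ is $\|\cdot\|_d$-closed: if $g_k\in J$ and $\|g_k-g\|_d\to0$, then $|\Phx g|\le\|g-g_k\|_d\,e$ for all $k$, which forces $\Phx g=0$.

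Next, $|\Phx g|\le\lambda e$ holds if and only if $(|g|-\lambda d)^+\in J$. In the AM-space $C(K)$ with $K$ the unit sphere of $d$, the closed ideal $J$ consists of the functions vanishing on a closed set $F$. Hence the infimum of such $\lambda$ equals $\sup_F|g|$, which is the quotient norm of $g$ in $H_n/J$. Therefore $(Y,\|\cdot\|_e)$ is isometric to the Banach space $H_n/J\cong C(F)$, and so it is complete.

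For uniqueness, let $\Phi,\Phi'$ both satisfy \textit{(i)}. They agree on the vector sublattice $L$ generated by $\pi_1,\dots,\pi_n$. By the lattice Stone--Weierstrass theorem, $L$ is $\|\cdot\|_d$-dense in $H_n$, since $L$ separates points of the unit sphere of $d$ and takes arbitrary values at pairs of points. Given $g\in H_n$ and $p\in L$ with $|g-p|\le\varepsilon d$, we obtain $|\Phi g-\Phi' g|\le 2\varepsilon e$. As $\varepsilon>0$ is arbitrary, the Archimedean property yields $\Phi g=\Phi' g$.
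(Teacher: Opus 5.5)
Your proof is correct, but in two of the four steps it takes a different route from the paper's template. The paper does not prove this proposition itself; it restates \cite[Proposition 2.2]{laustsen_troitsky2020}. Its proof of \cref{thm:conditions} is the natural template for comparison.

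\textbf{Where you agree with the template.} Your (ii)$\Rightarrow$(iii) and (iii)$\Rightarrow$(i) match it. The space $Y\cap I_f$ is a Banach lattice, hence uniformly complete, so \cite[Theorem 3.7]{buskes_depagter_vanrooij1991} applies. Passing to the closed sublattice $Z$ is harmless but unnecessary.

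\textbf{(i)$\Rightarrow$(ii).} The template is \cref{lem:prelim1}, which lifts an absolutely summable series from $\Phx(H_n)\cap I_e$ back to $I_d$ via truncations. You instead use that $H_n=I_d\cong C(K)$, with $K=\{d=1\}$, is an AM-space with unit, together with the classification of closed ideals of $C(K)$. This gives the sharper statement $(Y,\|\cdot\|_e)\cong H_n/\ker\Phx\cong C(F)$ isometrically.

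That isometry does not actually need the $C(K)$ picture. If $c=\|\Phx g\|_e$, then $g-(g\wedge cd)\vee(-cd)\in\ker\Phx$. Hence $\|\Phx g\|_e$ equals the $\|\cdot\|_d$-distance from $g$ to $\ker\Phx$, by the same truncation trick as in \cref{lem:prelim1}. That trick is what survives for $PG_n$, where there is no single unit.

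\textbf{Uniqueness.} The paper combines the transfinite description of the uniform closure (\cref{lem:prelim4}), the uniformly complete $Y'$ built from (iii), and the universal property of the free object. You use the lattice Stone--Weierstrass theorem to get $\|\cdot\|_d$-density of the sublattice generated by the $\pi_i$, then the estimate $|\Phi g-\Phi' g|\le 2\varepsilon e$ with the Archimedean property. Your argument is more elementary and needs none of the hypotheses. It works because in $H_n$ the uniform closure is reached in one step with the single regulator $d$.

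\textbf{Two small remarks.}
\begin{enumerate}
\item In the density step, antipodal pairs $t=-s$ cannot be interpolated by linear functions. You need something like $\alpha\ell^++\beta\ell^-$ with $\ell(s)=1$.
\item Restriction does not give $H_n\cong C([-1,1]^n)$, since it is not onto. You only use the correct identification with $C(K)$, so this does no harm.
\end{enumerate}
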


Of course, the conditions in \cref{thm:conditions} are more involved
than those in \cite[Proposition 2.2]{laustsen_troitsky2020}. This is
not so much due to the fact that we are dealing with the additional
algebraic structure (and algebraic homomorphisms), as it is to the
fact that the structure of $PG_n$ is more complex than that of $H_n$.
It is well-known that $H_n$ is an AM-space with unit. This allows for
much cleaner conditions. Also, the appearance of an \fsubalg\ should
not surprise the reader in view of \cref{prop:admitPG}.

Even though $PG_n$ is not an AM-space with unit, it can be expressed
as a countable increasing union of AM-spaces with unit.
More precisely, if $d=\one\vee |\pi _1|\vee \cdots \vee |\pi _n|$, then
\begin{equation*}
PG_n=\bigcup_{m \in \N} I_{d^{m}}\quad\text{and}\quad I_{d^{m}}\subseteq
I_{d^{m+1}}.
\end{equation*}
This simple observation is at the core of the proof of
\cref{thm:conditions}. Before proceeding to this proof, we need four
preliminary lemmas. The first is a slight generalization of
\cite[Lemma 2.1]{laustsen_troitsky2020}.

\begin{lem}\label{lem:prelim1}
    Let $X$ and $Y$ be Archimedean vector lattices, and let $T\colon
    X\to Y$ be a lattice homomorphism. Let $e \in X_+$. Then
    $T(I_e)\subseteq I_{Te}$, and the restriction $T|_{I_e}\colon
    (I_e,\|{\cdot }\|_e)\to (I_{Te},\|{\cdot }\|_{Te})$ is a
    contractive operator. Moreover, if $(I_e,\|{\cdot }\|_e)$ is
    complete, then the restriction of $\|{\cdot }\|_{Te}$ to
    $T(X)\cap I_{Te}$ is complete.
\end{lem}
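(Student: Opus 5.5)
The proof will follow the three assertions of the statement in order, using only the lattice homomorphism property of $T$ and the definition of the norms $\|{\cdot}\|_e$ and $\|{\cdot}\|_{Te}$.

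\emph{Step 1: $T(I_e)\subseteq I_{Te}$ and contractivity.} Take $x \in I_e$ and any $\lambda\ge 0$ with $|x|\le \lambda e$. Since $T$ is a lattice homomorphism, it is in particular positive, and $|Tx|=T|x|$. Applying $T$ to the inequality gives $|Tx|=T|x|\le \lambda Te$. Hence $Tx\in I_{Te}$. Taking the infimum over all admissible $\lambda$ gives $\|Tx\|_{Te}\le \|x\|_e$, so the restriction is contractive.

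\emph{Step 2: completeness of $\|{\cdot}\|_{Te}$ on $T(X)\cap I_{Te}$.} The difficulty is that $T(X)\cap I_{Te}$ might a priori be larger than $T(I_e)$, so completeness of the image of $I_e$ alone is not enough. I will first show the reverse inclusion $T(X)\cap I_{Te}\subseteq T(I_e)$.

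Let $y=Tx\in I_{Te}$ with $|Tx|\le \lambda Te$. Put
\[
x'=(x\wedge \lambda e)\vee(-\lambda e).
\]
Then $x' \in I_e$, since $|x'|\le \lambda e$. Because $T$ preserves finite lattice operations,
\[
Tx'=(Tx\wedge\lambda Te)\vee(-\lambda Te)=Tx.
\]
Moreover $\|x'\|_e\le\lambda$, so the same construction shows that $T(\{x\in I_e:\|x\|_e\le \lambda\})\supseteq \{y\in T(X)\cap I_{Te}: \|y\|_{Te}\le\lambda\}$ for every $\lambda$. Consequently $T(X)\cap I_{Te}=T(I_e)$, and $T|_{I_e}$ maps the closed ball of radius $\lambda$ onto the closed ball of radius $\lambda$ of $T(X)\cap I_{Te}$, for every $\lambda$. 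In particular the open unit ball of $I_e$ is mapped onto a set containing the open unit ball of the image.

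\emph{Step 3: the quotient argument.} The set $T(X)\cap I_{Te}$ is normed by $\|{\cdot}\|_{Te}$, since $X$ and $Y$ are Archimedean. Write $K=\ker T\cap I_e$, a closed subspace of the Banach space $I_e$ because $T|_{I_e}$ is bounded. The induced map
\[
I_e/K\to T(X)\cap I_{Te}
\]
is a linear bijection. By Step 1 it is contractive. By the ball surjectivity from Step 2, the quotient norm of a class is at most $\lambda$ whenever the image norm is at most $\lambda$. Together these make the induced map an isometry. Since a quotient of a Banach space by a closed subspace is complete, $T(X)\cap I_{Te}$ is complete.

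The main obstacle is Step 2, the truncation trick giving exact norm-preserving lifts. Everything else is routine once that is in place.
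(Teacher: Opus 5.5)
Your proof is correct and uses the same key idea as the paper. The truncation $(x\wedge\lambda e)\vee(-\lambda e)$ produces lifts in $I_e$ whose $\|\cdot\|_e$-norm is controlled by the $\|\cdot\|_{Te}$-norm of the image. The difference is only in packaging: the paper feeds these lifts directly into the absolutely-convergent-series test, whereas you use them to identify $T(X)\cap I_{Te}$ isometrically with the quotient $I_e/(\ker T\cap I_e)$ and then cite completeness of quotients of Banach spaces (whose standard proof is exactly that series argument), which also makes explicit that $T(X)\cap I_{Te}=T(I_e)$.
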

\begin{proof}
    If $x \in I_e$, then $|x|\le \|x\|_e e$ and
    \[
    |Tx|=T|x|\le \|x\|_e Te
    \]
    because $T$ is a lattice homomorphism. Hence $\|Tx\|_{Te}\le
    \|x\|_e$. This proves that $T(I_e)\subseteq I_{Te}$ and that
    the restriction $T|_{I_e}\colon (I_e,\|{\cdot }\|_e)\to
    (I_{Te},\|{\cdot }\|_{Te})$ is a contractive operator.

    Suppose that $(I_e,\|{\cdot }\|_e)$ is complete. We will show that
    every absolutely convergent series in $(T(X)\cap I_{Te}, \|{\cdot
    }\|_{Te})$ is convergent. Let $x_n \in T(X)\cap I_{Te}$ be a
    sequence for which the associated series $\sum_{n=1}^{\infty}x_n$ is
    absolutely convergent in $\|{\cdot }\|_{Te}$. For every $n \in
    \N$, let $y_n \in X$ be such that $T(y_n)=x_n$, and set $z_n=(y_n
    \wedge \|x_n\|_{Te}e)\vee (-\|x_n\|_{Te}e)$. Then $|z_n|\le
    \|x_n\|_{Te}e$, so $z_n \in I_e$ and $\|z_n\|_{e}\le
    \|x_n\|_{Te}$. Moreover, since $|x_n|\le \|x_n\|_{Te}Te$ (i.e.,
    $-\|x_n\|_{Te}Te\le x_n\le \|x_n\|_{Te}Te$), we have $T(z_n)=(x_n
    \wedge \|x_n\|_{Te}Te)\vee (-\|x_n\|_{Te}Te)=x_n$. By assumption,
    $\sum_{n=1}^{\infty}z_n$ converges to a certain $z \in I_e$. Using
    that $T|_{I_e}$ is linear and continuous, it follows that
    $\sum_{n=1}^{\infty}x_n=Tz$.
\end{proof}

\begin{lem}\label{lem:prelim2}
    Let $X$ be an Archimedean vector lattice. For $n \in \N$, let $I_n\subseteq X$
    be a sequence of ideals such that $I_n\subseteq I_{n+1}$ and
    \[
    X=\bigcup_{n \in \N} I_n.
    \]
    Then $X$ is uniformly complete if and only if $I_n$ is uniformly
    complete for every $n \in \N$.
\end{lem}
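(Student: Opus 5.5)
The plan is to reduce both directions to one observation: for an ideal $J$ of $X$ and $e\in J_+$, the principal ideal generated by $e$ inside the vector lattice $J$, with its norm, is exactly $(I_e,\|{\cdot}\|_e)$ computed in $X$. Once this is in place, uniform completeness of $X$ and of the $I_n$ become literally the same family of completeness conditions.

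First I check that each $I_n$ is an Archimedean vector lattice, so that "uniformly complete" makes sense for it under the paper's convention. It is a vector sublattice of $X$, since ideals are solid subspaces and hence closed under $|{\cdot}|$. The Archimedean property is inherited from $X$ because the order on $I_n$ is the restriction of the order on $X$.

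Next I prove the observation. Let $J$ be an ideal and $e\in J_+$. In $X$ we have $I_e=\{x\in X : |x|\le\lambda e \text{ for some }\lambda\ge0\}$. Since $J$ is solid and $e\in J$, every such $x$ lies in $J$, so $I_e\subseteq J$. Hence the principal ideal of $e$ in $J$, namely $\{x\in J: |x|\le \lambda e\}$, coincides with $I_e$. The norm $\|x\|_e=\inf\{\lambda\ge0: |x|\le\lambda e\}$ depends only on the order between $|x|$ and $e$, so the two norms also agree.

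Now both implications follow directly.
\begin{itemize}
\item[$(\Rightarrow)$] Suppose $X$ is uniformly complete and fix $n$. For every $e\in(I_n)_+$, the principal ideal of $e$ in $I_n$ is $(I_e,\|{\cdot}\|_e)$ by the observation, and this is complete by assumption. Hence $I_n$ is uniformly complete.
\item[$(\Leftarrow)$] Suppose every $I_n$ is uniformly complete, and let $e\in X_+$. Since $X=\bigcup_n I_n$, there is some $n$ with $e\in I_n$. By the observation, $(I_e,\|{\cdot}\|_e)$ is the principal ideal of $e$ in $I_n$, which is complete by assumption. Hence $X$ is uniformly complete.
\end{itemize}

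I expect no real obstacle here. The only point needing care is the identification of the principal ideal and its norm in $J$ versus in $X$, which rests entirely on solidity of $J$. Note that the monotonicity $I_n\subseteq I_{n+1}$ is not needed; it suffices that the $I_n$ cover $X$.
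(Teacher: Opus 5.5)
Your proof is correct. It differs from the paper's mainly in which formulation of uniform completeness it works with.

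You use the definition exactly as stated in the preliminaries, namely completeness of $(I_e,\|{\cdot}\|_e)$ for every positive $e$. You then isolate the one structural fact that matters: for an ideal $J$ and $e\in J_+$, solidity gives $I_e\subseteq J$, so the principal ideal of $e$ in $J$ and its norm coincide with those computed in $X$. Both directions then follow at once. This also justifies the paper's unproved remark that every ideal of a uniformly complete space is uniformly complete.

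The paper instead argues the converse with sequences. It takes a sequence $(x_k)$ that is Cauchy for $\|{\cdot}\|_y$ with $y\in X_+$; the $x_k$ themselves need not lie in $I_y$. It passes to a tail with $|x_k-x_l|\le y$. It then uses the nesting $I_n\subseteq I_{n+1}$ to find a single $I_m$ containing both $y$ and $x_1$. From $|x_k|\le |x_1|+y$ it deduces that the whole sequence lies in $I_m$, and it concludes by uniform completeness of $I_m$. This route fits the relatively uniform convergence viewpoint the paper uses elsewhere, but it is exactly where monotonicity is spent.

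Your argument shows, correctly, that the hypothesis $I_n\subseteq I_{n+1}$ is superfluous: any covering of $X$ by ideals suffices. Even in the sequential setting one could drop it by applying completeness of an ideal containing $y$ to the sequence $x_k-x_1\in I_y$.
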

\begin{proof}
    If $X$ is uniformly complete, then every ideal in $X$ is uniformly
    complete. Conversely, suppose that $I_n$ is uniformly complete for every $n \in \N$.
    Let $(x_k)\subseteq X$ be a Cauchy sequence in the norm $\|{\cdot }\|_y$, for some
    $y \in X_+$. By keeping only a tail of the sequence and relabeling, we may assume
    that $|x_k-x_l|\le y$ for all $k,l \in \N$. Since the sequence of
    ideals $I_n$ is increasing, there exists some $m \in \N$ for which
    $y,x_1\in I_m$. Then
    \[
    |x_k|-|x_1|\le | |x_k| - |x_1| |\le |x_k-x_1|\le y
    \]
    implies that $x_k \in I_m$ for all $k \in \N$. Since $I_m$ is
    uniformly complete, the sequence $(x_k)$ is uniformly convergent
    in $I_m$, and therefore also in $X$.
\end{proof}

For a positive element $e$ in a lattice-ordered algebra $X$,
denote by $IA_e$ the smallest subspace of $X$ containing $e$ that is,
at the same time, an order ideal and a subalgebra.

\begin{lem}\label{lem:prelim3}
    Let $X$ be an Archimedean lattice-ordered algebra and let $e \in X_+$ be such
    that $e^{m}\in I_{e^{m+1}}$ for all $m \in \N$. Then
    \[
    IA_e=\bigcup_{m \in \N} I_{e^{m}}.
    \]
\end{lem}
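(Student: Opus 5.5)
We prove the two inclusions separately, writing $U=\bigcup_{m\in\N} I_{e^{m}}$.

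\emph{Step 1: $U$ is an order ideal.} The hypothesis $e^{m}\in I_{e^{m+1}}$ gives $I_{e^{m}}\subseteq I_{e^{m+1}}$: if $|x|\le \lambda e^{m}$ and $e^{m}\le \mu e^{m+1}$, then $|x|\le\lambda\mu e^{m+1}$. Hence the ideals $I_{e^m}$ form an increasing chain. An increasing union of order ideals is again an order ideal: it is closed under sums and scalars because any two elements lie in a common $I_{e^{k}}$, and it is solid because each $I_{e^{k}}$ is. Clearly $e\in I_{e}\subseteq U$.

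\emph{Step 2: $U$ is a subalgebra.} Take $x\in I_{e^{k}}$ and $y\in I_{e^{l}}$, so $|x|\le \lambda e^{k}$ and $|y|\le \mu e^{l}$. In a lattice-ordered algebra we have $|xy|\le |x||y|$, because the product of positive elements is positive. Positivity of products also gives monotonicity: $0\le a\le a'$ and $0\le b\le b'$ imply $ab\le a'b'$. Therefore $|xy|\le \lambda\mu e^{k}e^{l}=\lambda\mu e^{k+l}$, where we use associativity and the fact that powers of $e$ commute with each other. So $xy\in I_{e^{k+l}}\subseteq U$. This shows $U$ is a subspace containing $e$ that is both an order ideal and a subalgebra, and thus $IA_e\subseteq U$.

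\emph{Step 3: $U\subseteq IA_e$.} Since $IA_e$ is a subalgebra containing $e$, it contains every power $e^{m}$. Since it is an order ideal containing $e^{m}$, it contains $I_{e^{m}}$. Taking the union over $m$ gives $U\subseteq IA_e$.

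None of these steps is a real obstacle. The only point needing care is the one where the hypothesis $e^{m}\in I_{e^{m+1}}$ is used: it makes the union increasing, and without it the union need not be closed under addition. Everything else follows from $|xy|\le|x||y|$, which holds in any lattice-ordered algebra. Archimedeanity is not needed for this lemma.
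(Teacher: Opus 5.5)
Your proposal is correct and follows essentially the same route as the paper: you use the hypothesis to make the ideals $I_{e^m}$ increasing so that their union is an ideal, then the bound $|xy|\le|x||y|\le\lambda\mu e^{k+l}$ to show it is a subalgebra, and finally the minimality of $IA_e$ for the reverse inclusion. Your side remarks that $|xy|\le|x||y|$ holds in any lattice-ordered algebra and that Archimedeanity is not needed are also accurate.
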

\begin{proof}
    Let $B$ be the set on the right hand side. The condition
    $e^{m}\in I_{e^{m+1}}$ implies that $I_{e^{m}}\subseteq
    I_{e^{m+1}}$ for all $m \in \N$. It follows that $B$, being a
    nested union of ideals, is an ideal.
    To show that it is an algebra, let $x$ and $y$ be elements of $B$, say $|x|\le
    M_1e^{m_1}$ and $|y|\le M_2e^{m_2}$, for some $M_1,M_2 \in \R_+$
    and $m_1,m_2 \in \N$. Then
    \[
    |xy|\le |x| |y|\le M_1M_2 e^{m_1+m_2}
    \]
    and therefore $xy \in I_{e^{m_1+m_2}}\subseteq B$. Hence $B$ is a
    subalgebra and an order ideal that contains $e$. In particular,
    $IA_e\subseteq B$. Conversely, since $IA_e$ is a subalgebra
    containing $e$, $e^{m}\in IA_e$ and, since it is an order ideal,
    $I_{e^{m}}\subseteq IA_e$ for every $m \in \N$. It follows that
    $B\subseteq IA_e$.
\end{proof}

\begin{rem}
    If $X$ has identity $1_X$, then the previous lemma applies to every
    $e\ge 1_X$.
\end{rem}

The final preliminary lemma is only needed for uniqueness.

\begin{lem}\label{lem:prelim4}
    Let $X$ and $Y$ be Archimedean vector lattices, with $X$ uniformly complete,
    and let $T\colon X\to Y$ be a positive map. Let $X'\subseteq X$
    and $Y'\subseteq Y$ be sublattices, with $Y'$ uniformly complete.
    Denote by $\overline{X'}$ the uniform closure of $X'$ in $X$. If
    $T(X')\subseteq Y'$, then also $T(\overline{X'})\subseteq Y'$.
\end{lem}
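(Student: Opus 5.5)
We will argue by transfinite induction along the construction of the uniform closure. Recall that $\overline{X'}$ is obtained from $X'$ by iterating the \emph{pseudo-closure}: set $X'_0=X'$, let $X'_{\alpha+1}$ be the set of limits of sequences in $X'_\alpha$ that converge uniformly in $X$, and take unions at limit ordinals. This stabilises at the uniform closure. Each $X'_\alpha$ is again a vector sublattice, since lattice operations and linear combinations preserve relatively uniform convergence. So it suffices to prove the successor step: if $T(X'_\alpha)\subseteq Y'$ and $\ulim{x_k}{x}{e}$ in $X$ with $x_k\in X'_\alpha$, then $Tx\in Y'$.

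The basic estimate comes from positivity of $T$. Suppose $|x_k-x|\le \varepsilon_k e$ with $\varepsilon_k\to0$. Then
\[
|Tx_k-Tx|\le T|x_k-x|\le \varepsilon_k\,Te .
\]
Hence $(Tx_k)$ converges uniformly to $Tx$ in $Y$ with regulator $Te$. In particular it is Cauchy with regulator $Te$.

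We must then transfer this into $Y'$. The difficulty is that $Te$ need not lie in $Y'$, and uniform completeness of $Y'$ only controls sequences that are Cauchy with respect to a regulator in $Y'$. To manufacture such a regulator, we pass to a subsequence with $|x_{k+1}-x_k|\le 8^{-k}e$ and use the uniform completeness of $X$ to form
\[
u=|x_1|+\sum_{k\ge1}2^k|x_{k+1}-x_k| ,
\]
which converges uniformly in $X$ with regulator $e$. Its partial sums $u_N$ lie in $X'_\alpha$, so $Tu_N\in Y'$ by the inductive hypothesis. Each difference $w_k=Tx_{k+1}-Tx_k$ lies in $Y'$ and satisfies $2^k|w_k|\le Tu_N$ for $N\ge k$. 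One then wants to show that $Tu$ (or a suitable element of $Y'$ dominating the tails) belongs to $Y'$. With such a regulator $r\in Y'$, we would get $|Tx_l-Tx_k|\le 2^{-k+1}r$ for $l>k$. Uniform completeness of $Y'$ then gives a limit $y\in Y'$, and since $Y$ is Archimedean, uniform limits in $Y$ are unique, forcing $Tx=y\in Y'$.

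The main obstacle is exactly this regulator step: producing an element of $Y'$ that controls the tails of $(Tx_k)$, rather than only the element $Te\in Y$. The difficulty is circular, because $u$ is itself only in $X'_{\alpha+1}$. My first attempt will be to choose the decay rates so that the increasing sequence $(Tu_N)\subseteq Y'$ is itself Cauchy with regulator $Tu_{N_0}$ for a fixed $N_0$, which lies in $Y'$. Concretely, I would aim for an estimate of the form $u-u_N\le 2^{-N}u_{N_0}$ by choosing the subsequence so that each increment is dominated by a multiple of the previous partial sums. If this works, $Tu\in Y'$ follows from uniform completeness of $Y'$ directly, and then $Tu$ serves as the regulator $r$ above, completing the induction.
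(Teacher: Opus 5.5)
\textbf{There is a genuine gap, and a cleverer choice of subsequence will not close it: you are iterating the wrong closure.} You let $X'_{\alpha+1}$ consist of all uniform limits of sequences from $X'_\alpha$ with an \emph{arbitrary} regulator $e\in X$. That produces the relatively uniformly closed hull of $X'$ in $X$, and for that set the statement is false.

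Take $X=Y=C[0,1]$ and $T=\mathrm{id}$. Let $X'=Y'=I_\iota$ be the principal ideal generated by $\iota(s)=s$. Then $Y'$ is an ideal of a uniformly complete space, hence uniformly complete. The elements $x_n=\sqrt{\iota}\wedge n\iota\in X'$ satisfy $\|x_n-\sqrt{\iota}\|_\infty\le 1/n$, so $\ulim{x_n}{\sqrt{\iota}}{\one}$. But $\sqrt{\iota}\notin Y'$.

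So the regulator step you flag is a real obstruction, not a technical one. In this example your element $u$ satisfies $u\ge u_N\ge |x_{N+1}|$ for every $N$, hence $u\ge\sqrt{\iota}$ and $u\notin Y'$. The estimate $u-u_N\le 2^{-N}u_{N_0}$ you hope to arrange would give $u\le 2u_{N_0}\in I_\iota$, which is impossible. No element of $Y'$ can control the tails of $(Tx_k)$, because otherwise the limit would lie in $Y'$.

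\textbf{The missing idea is the correct description of $\overline{X'}$.} Here it is the uniform completion of $X'$ inside $X$, that is, the intersection of all uniformly complete sublattices of $X$ containing $X'$. It is obtained by a transfinite iteration in which the regulator is also taken from the previous stage. Set $Z_1=X'$ and
\[
Z_{\beta+1}=\{x\in X:\ \ulim{x_n}{x}{y}\ \text{for some } x_n,y\in Z_\beta\},
\]
with unions at limit ordinals, and stop at $\omega_1$. The union $\bigcup_{\beta<\omega_1}Z_\beta$ is uniformly complete, because any countable family of its elements lies in a single $Z_\gamma$. It is also contained in every uniformly complete sublattice containing $X'$, by induction and uniqueness of uniform limits in $X$. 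This is the description the paper imports from Emelyanov--Gorokhova.

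With it, the successor step is immediate. Positivity gives $\ulim{Tx_n}{Tx}{Ty}$ with $Tx_n,Ty\in Y'$. So $(Tx_n)$ is $Ty$-uniformly Cauchy in $Y'$ and converges in $Y'$ by uniform completeness. Its limit equals $Tx$ by uniqueness of uniform limits in the Archimedean space $Y$.

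Your estimate $|Tx_k-Tx|\le\varepsilon_k\,Te$ and your closing uniqueness argument are fine. The only change needed is that the regulator must come from the stage $Z_\beta$, not from $X$.
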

\begin{proof}
    To prove this result, we need a different description of the
    uniform closure. Define a transfinite sequence $(Z_\beta )_{\beta
    <\omega _1}$ of sublattices of $X$ as follows: $Z_1=X'$,
    \[
        Z_{\beta +1}=\{\, x \in X : \ulim{x_n}{x}{y},\text{ for some
        }x_n,y \in Z_\beta \, \}
    \]
    for all $\beta <\omega _1$ and, if $\gamma <\omega _1$ is a limit
    ordinal, then $Z_\gamma =\bigcup_{\beta <\gamma } Z_\beta $. It is
    then true that
    \begin{equation}\label{eq:unif_clos_limit}
    \overline{X'}=\bigcup_{\beta <\omega _1} Z_\beta
    \end{equation}
    (see \cite[Lemma 4]{emelyanov_gorokhova2024}). We will prove by
    transfinite induction on $\beta$ that $T(Z_\beta )\subseteq Y'$.

    To do so, first note that $\ulim{x_n}{x}{y}$ in $X$ implies
    $\ulim{Tx_n}{Tx}{Ty}$ in $Y$ by the positivity of $T$. By
    assumption, $T(Z_1)\subseteq Y'$. Suppose that $T(Z_\beta
    )\subseteq Y'$ and let $x \in Z_{\beta +1}$. Then there exist
    $x_n,y \in Z_\beta $ such that $\ulim{x_n}{x}{y}$. Applying $T$ it
    follows that $\ulim{Tx_n}{Tx}{Ty}$ with $Tx_n,Ty \in Y'$. Since
    $Y'$ is uniformly complete, and uniform limits are unique, we must
    have $Tx \in Y'$. Hence $T(Z_{\beta +1})\subseteq Y'$. Finally, if
    $\gamma <\omega _1$ is a limit ordinal and $T(Z_\beta )\subseteq
    Y'$ for all $\beta <\gamma $, then
    \[
    T(Z_\gamma )=T\bigg(\bigcup_{\beta <\gamma } Z_\beta
    \bigg)=\bigcup_{\beta <\gamma } T(Z_\beta )\subseteq Y'.
    \]
    From \eqref{eq:unif_clos_limit} it follows that
    $T(\overline{X'})\subseteq Y'$.
\end{proof}

\begin{proof}[Proof of \cref{thm:conditions}]
    First we show that \textit{(i)} implies \textit{(ii)}. Let
    $\Psx\colon PG_n\to X$ be a lattice-algebra homomorphism with
    $\Psx(\one)=1_X$ and $\Psx(\pi _i)=x_i$ for $i=1,\ldots ,n$. Put
    $Y=\Psx(PG_n)$. Since $PG_n$ is an \falg, and $\Psx$ is a
    lattice-algebra homomorphism, it follows that $Y$ is an
    $f\!$-subalgebra of $X$ (see \cite[Proposition 3.2]{boulabiar2002}). Denote $d=\one \vee |\pi
    _1|\vee \cdots \vee |\pi _n|$. It is clear that
    $\Psx(d^{m})=e^{m}$. Also, since $PG_n$ is uniformly complete,
    $(I_{d^{m}},\|{\cdot }\|_{d^{m}})$ is complete. By
    \cref{lem:prelim1}, $\|{\cdot }\|_{e^{m}}$ is complete on $Y\cap
    I_{e^{m}}$.

    That \textit{(ii)} implies \textit{(iii)} is obvious. To show that
    \textit{(iii)} implies \textit{(i)}, let $f \in X_+$ be such that
    $f\ge 1_X\vee |x_1|\vee \cdots \vee |x_n|$, and let $Y$ be
    an $f\!$-subalgebra of $X$ satisfying the conditions of
    \textit{(iii)}. Put $Y'=IA_f\cap Y$. Note that $1_X,x_1,\ldots
    ,x_n \in Y'$. Being the intersection of
    sublattice-algebras, $Y'$ is again a sublattice-algebra of $X$. In
    fact, since $Y'\subseteq Y$, $Y'$ is also an $f\!$-subalgebra of
    $X$. By assumption, $f\ge 1_X$, and by \cref{lem:prelim3},
    \[
    IA_f=\bigcup_{m \in \N} I_{f^{m}}.
    \]
    Therefore
    \[
    Y'=Y'\cap IA_f=\bigcup_{m \in \N} Y'\cap I_{f^{m}}.
    \]
    Note that, for every $m \in \N$, $Y'\cap I_{f^{m}}$ is an ideal in
    $Y'$ and $Y'\cap I_{f^{m}}\subseteq Y'\cap I_{f^{m+1}}$. By
    assumption, $\|{\cdot }\|_{f^{m}}$ is complete on $Y\cap
    I_{f^{m}}=Y'\cap I_{f^{m}}$; being a Banach
    lattice, $Y'\cap I_{f^{m}}$ is uniformly complete. By \cref{lem:prelim2}, $Y'$ is
    uniformly complete. In conclusion, $Y'$ is a uniformly complete
    \falg\ with identity $1_X$ that contains $x_1,\ldots ,x_n$. By
    \cref{prop:ucPG}, there exists a lattice-algebra homomorphism
    $\Psx\colon PG_n\to Y'$ satisfying $\Psx(\one)=1_X$ and $\Psx(\pi
    _i)=x_i$ for $i=1,\ldots ,n$. The composition of this map with the
    natural inclusion $Y'\subseteq X$ proves that \textit{(i)} holds.

    It only remains to prove uniqueness.
    Suppose that a map $\Psx$ such as that in \textit{(i)} exists. We have
    proved above that there exists a uniformly complete \falg\ $Y'$ with
    identity $1_X$ such that $x_1,\ldots ,x_n\in Y'$. Note that
    $\Psx(\VLA\{\one,\pi _1,\ldots ,\pi _n\})\subseteq Y'$. According to
    \cref{thm:ucgenerated},
    $PG_n$ is the uniform closure of $\VLA\{\one,\pi _1,\ldots ,\pi _n\}$.
    Applying \cref{lem:prelim4} it follows that $\Psx(PG_n)\subseteq Y'$.
    Recall from \cref{thm:fucfa} that $PG_n$ is the free object in the
    category of uniformly complete \falg s with identity. Hence $\Psx$
    must be the composition of
    the unique map $PG_n\to Y'$ sending $\one$ to $1_X$ and $\pi _i$
    to $x_i$, for $i=1,\ldots ,n$, with the natural inclusion
    $Y'\subseteq X$.
\end{proof}

As a consequence of this theorem, we obtain a characterization of
the lattice-ordered algebras with identity that admit a polynomial growth
continuous function calculus.

\begin{cor}\label{cor:conditions}
    Let $X$ be an Archimedean lattice-ordered algebra with identity
    $1_X$. Then
    $X$ admits a polynomial growth continuous function calculus if and
    only if $X$ is an \falg\ and, for every $x_1,\ldots ,x_n \in X$,
    there exists $f\ge 1_X\vee |x_1|\vee \cdots
    \vee |x_n|$ such that, for every $m \in \N$, the norm
    $\|{\cdot }\|_{f^{m}}$ is complete on the closed sublattice
    generated by $x_1,\ldots ,x_n$ in $I_{f^{m}}$.

    In this case, the polynomial growth continuous function calculus
    is unique (in the sense that for each $n \in \N$ and $\bm x \in
    X^{n}$ there is only one lattice-algebra homomorphism $\Psx\colon
    PG_n\to X$ which satisfies \eqref{eq:proj_one}), and
    \begin{equation}\label{eq:compo}
        \Psi _{(\Psi _{\bm x}(f_1),\ldots ,\Psi _{\bm x}(f_m))}(g)=\Psi
        _{\bm x}(g\circ (f_1\times \cdots \times f_m))
    \end{equation}
    for each $m,n \in \N$, $\bm x \in X^{n}$, $f_1,\ldots ,f_m \in
    PG_n$ and $g \in PG_{m}$, where $f_1\times \cdots \times f_m\colon
    \R^{n}\to \R^{m}$ is the function defined by
    \[
        (f_1\times \cdots \times f_m)(t)=(f_1(t),\ldots ,f_m(t)),\quad
        t \in \R^{n}.
    \]
\end{cor}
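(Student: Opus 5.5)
The plan is to reduce everything to \cref{thm:conditions}, \cref{prop:admitPG} and the uniqueness clause of \cref{thm:conditions}, treating the corollary as three separate claims: the forward implication, the converse, and the uniqueness and composition statements.

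\textbf{Forward implication.} Suppose $X$ admits a polynomial growth continuous function calculus. Restricting each $\Psx$ to $PG_n^{h}\subseteq PG_n$ gives a calculus without constants, so $X$ is a commutative \falg\ by \cref{prop:admitPG}. Fix $x_1,\dots,x_n$ and apply (i)$\Rightarrow$(ii) of \cref{thm:conditions}. This gives $f=e=1_X\vee|x_1|\vee\cdots\vee|x_n|$ and $Y=\Psx(PG_n)$ such that $\|\cdot\|_{e^m}$ is complete on $Y\cap I_{e^m}$. Since $e\ge 1_X$, we have $|x_i|\le e\le e^m$, so $Y\cap I_{e^m}$ is a sublattice of $I_{e^m}$ containing the $x_i$. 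Being complete, it is closed in $(I_{e^m},\|\cdot\|_{e^m})$. Hence it contains the closed sublattice generated by the $x_i$ in $I_{e^m}$. That sublattice is a closed subset of a complete space, so it is complete.

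\textbf{Converse.} Assume $X$ is an \falg\ satisfying the completeness condition, and verify (iii) of \cref{thm:conditions} for a given tuple. The difficulty is that the hypothesis only provides completeness of closed \emph{sublattices}, while (iii) needs an \fsubalg\ $Y$.

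I would first apply the hypothesis to the enlarged tuple $(1_X,x_1,\dots,x_n)$, to ensure $1_X$ lies in the relevant sublattices. I would then build $Y$ as follows. Let $W$ be the sublattice-algebra generated by $1_X,x_1,\dots,x_n$. For each $m$, let $C_m$ be the closure of $W\cap I_{f^m}$ in $(I_{f^m},\|\cdot\|_{f^m})$, and set $Y=\bigcup_m C_m$.

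Two structural facts are needed.
\begin{itemize}
\item The $C_m$ are nested and each is a closed sublattice. This uses $f\ge 1_X$, which gives $\|\cdot\|_{f^{m+1}}\le\|\cdot\|_{f^m}$ on $I_{f^m}$.
\item $Y$ is closed under products. In an \falg\ we have $|ab|=|a||b|$, so $\|ab\|_{f^{m+k}}\le\|a\|_{f^m}\|b\|_{f^k}$. Multiplication is therefore jointly continuous from $C_m\times C_k$ into $I_{f^{m+k}}$, and maps $C_m\times C_k$ into $C_{m+k}$.
\end{itemize}
It follows that $Y$ is an \fsubalg\ containing $1_X$ and the $x_i$.

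\textbf{Main obstacle.} The step I expect to be hardest is completeness of $Y\cap I_{f^m}$. Each generator of $W$ is a lattice expression in finitely many monomials $x^\alpha$, and the hypothesis applied to a tuple of monomials produces its own $f$. One must therefore control the interplay between different dominating elements. My first attempt would be to show that any Cauchy sequence in $Y\cap I_{f^m}$ can be truncated, as in \cref{lem:prelim1}, using the fact that multiplication by positive elements is a lattice homomorphism. The truncation would place the sequence inside the closed sublattice generated by a fixed finite family of products of the $x_i$ in a common $I_{f^{m'}}$, where the hypothesis yields a limit; one then checks that this limit lies in $Y\cap I_{f^m}$.

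\textbf{Uniqueness and composition.} Uniqueness of the calculus is exactly the uniqueness clause of \cref{thm:conditions}. For \eqref{eq:compo}, the map $g\mapsto g\circ(f_1\times\cdots\times f_m)$ sends $PG_m$ into $PG_n$, since polynomial growth is preserved under composition with functions of polynomial growth. It is a unital lattice-algebra homomorphism, because all operations are pointwise, and it sends $\pi_j$ to $f_j$. Composing with $\Psx$ gives a unital lattice-algebra homomorphism $PG_m\to X$ sending $\pi_j$ to $\Psx(f_j)$. The left-hand side of \eqref{eq:compo} is another such homomorphism, so the two coincide by uniqueness.
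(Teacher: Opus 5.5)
Your forward implication and your treatment of uniqueness and \eqref{eq:compo} are correct and agree with the paper. You even supply a step the paper leaves implicit: the complete, hence closed, space $Y\cap I_{e^m}$ contains the closed sublattice generated by the $x_i$. The gap is in the converse. You never show that $\|\cdot\|_{f^m}$ is complete on $Y\cap I_{f^m}$, and that is the entire content of this direction, since producing an \fsubalg\ containing $1_X,x_1,\dots,x_n$ is easy. The paper dismisses the converse as following ``directly'' from \cref{thm:conditions}. You rightly notice that the hypothesis does not match condition (iii): the hypothesis gives completeness of closed \emph{sublattices} generated by finitely many elements, with an $f$ depending on the tuple, whereas (iii) needs a single $f$ and an \fsubalg. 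Noticing the mismatch does not bridge it, and your truncation plan does not bridge it either, for three reasons. First, elements of $W\cap I_{f^m}$ are lattice expressions in monomials of arbitrarily high degree (only their size is controlled), so a Cauchy sequence in $C_m$ need not lie in the closed sublattice generated by any fixed finite family. Second, for a fixed family of monomials the hypothesis gives completeness only for $\|\cdot\|_{g^{m'}}$ with some other $g$, which says nothing about closures or Cauchy sequences for $\|\cdot\|_{f^m}$. Third, $Y\cap I_{f^m}$ is in general strictly larger than $C_m$, so completeness of each $C_m$ would not suffice anyway. For instance, take $X=PG_1$ and $f=d$. Every $q\in W\cap I_{d^m}$ has $q/d^m$ with limits at $\pm\infty$, so $t\mapsto d(t)^m\sin t$ is not in $C_m$, yet it lies in $C_{m+1}\cap I_{d^m}$.

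One way to close the gap is to bypass the norms $\|\cdot\|_{f^m}$ entirely. Apply the hypothesis with $m=1$ to the tuple $\bm x^{(N)}=(\bm x^\alpha)_{|\alpha|\le N}$ of all monomials of degree at most $N$, with $\bm x^0=1_X$. Take as sublattice the closed sublattice generated by this tuple. The Laustsen--Troitsky proposition recalled after \cref{thm:conditions} then gives a unique lattice homomorphism $\Phi_N\colon H_{k_N}\to X$ sending the coordinate projection $\pi_\alpha$ to $\bm x^\alpha$; here $k_N$ is the number of multi-indices with $|\alpha|\le N$. Every $g\in PG_n$ with $|g|\le c\,d^{N-1}$ factors as $g=h\circ v_N$, where $h\in H_{k_N}$ and $v_N(t)=(t^\alpha)_{|\alpha|\le N}$. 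To build $h$, set $h(\lambda v_N(t))=\lambda g(t)$ for $\lambda\ge 0$, and $h=0$ at the remaining points of the closure of this cone (they have vanishing $\one$-coordinate). Continuity there follows from $|g|\le c\,d^{N-1}$ together with $\bigvee_\alpha|t^\alpha|=d(t)^N$; then extend by Tietze. Define $\Psx(g)=\Phi_N(h)$. Independence of $h$ and $N$, the lattice homomorphism property and multiplicativity all follow from one approximation argument. Choose piecewise linear $h_k$ with $|h-h_k|\le k^{-1}\bigvee_\alpha|\pi_\alpha|$. Note that $\Phi_N(h_k)$ is the image of $h_k\circ v_N$ under the unital lattice-algebra homomorphism $\VLA\{\one,\pi_1,\dots,\pi_n\}\to X$ given by freeness. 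Then use the Archimedean property with regulator $\bigvee_\alpha|\bm x^\alpha|=e^N$. The only regulators that appear are powers of $e$, so the interplay between tuple-dependent dominating elements never arises.
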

\begin{proof}
    If $X$ admits a polynomial growth continuous function calculus,
    then $X$ is an \falg\ by \cref{prop:admitPG}. And if $x_1,\ldots ,x_n \in X$, then
    $e=1_X\vee|x_1|\vee \cdots \vee |x_n| $ has the stated properties
    by \cref{thm:conditions}. The converse also follows directly from \cref{thm:conditions}.

    Uniqueness was proved in \cref{thm:conditions}. To show
    \eqref{eq:compo}, let $f_1,\ldots ,f_m \in PG_n$ and $g \in PG_m$
    be as stated. First we need to check that $g\circ (f_1\times
    \cdots \times f_m) \in PG_n$. Suppose $|g|\le M (\one + |\pi _1| + \cdots +
    |\pi _m|)^{N}$ for some $M\in \R_+$ and $N \in \N$. Then it is immediate
    to check that
    \[
    |g\circ (f_1\times \cdots \times f_m)|\le M(\one + |f_1| + \cdots
    + |f_m|)^{N}.
    \]
    Since $f_1,\ldots ,f_m \in PG_n$, the right hand side of this
    equation belongs to $PG_n$. Since $PG_n$ is an ideal in
    $C(\R^{n})$, and $g\circ (f_1\times \cdots \times f_m)$ is
    continuous, it follows that $g\circ (f_1\times \cdots \times f_m)
    \in PG_n$. Hence, it makes sense to evaluate $\Psx$ at $g\circ (f_1\times
    \cdots \times f_m)$. This is true of every $g \in PG_m$.
    It is immediate to check that the map
    \[
    \begin{array}{cccc}
            & PG_m & \longrightarrow & X \\
            & g & \longmapsto & \Psx(g\circ (f_1\times \cdots \times
            f_m)) \\
    \end{array}
    \]
    is a lattice-algebra homomorphism that maps $\pi _i$ to $\Psx
    (f_i)$ for $i=1,\ldots ,m$. By uniqueness, it must be the map $\Psi
    _{(\Psx(f_1),\ldots ,\Psx(f_m))}$.
\end{proof}

To complete this section, we present three examples illustrating \cref{thm:conditions}.
The first one shows that, even if $X$ is a uniformly complete
lattice-ordered algebra with identity, it may not contain sufficiently many
$f\!$-subalgebras to admit a polynomial growth continuous function
calculus.

\begin{example}
    Consider $\R^2$ with the usual vector lattice structure. Define
    the product
    \[
        (x,y)(x',y')=(xx'+xy'+yx',yy').
    \]
    It is immediate to check that $\R^2$ with this product is a
    lattice-ordered algebra with identity $(0,1)$. But it is not an \falg,
    because $(1,0)(0,1)=(1,0)$. The element $(1,0)$ is such that, if
    $Y$ is a sublattice-algebra containing both $(1,0)$ and the
    identity $(0,1)$, then
    $Y=\R^2$, which is not an \falg. Hence there is no $f$-subalgebra
    containing $(1,0)$. By \cref{thm:conditions}, we cannot perform
    polynomial growth continuous function calculus on $(1,0)$.
\end{example}

The second example fails the other condition: it is an \falg, but
it does not contain sublattice-algebras $Y$ for which the norm $\|{\cdot
}\|_{e^{m}}$ is complete on $Y\cap I_{e^{m}}$ (where we are using the notation of \cref{thm:conditions}).

\begin{example}
    Consider the function $I\colon [0,1]\to [0,1]$ given by $I(x)=x$
    for all $x \in [0,1]$. Let $X$ be the sublattice-algebra of
    $C[0,1]$ generated by $I$ and $\one_{[0,1]}$. Note that $X$ is an
    \falg\ with identity. We have $\one_{[0,1]}\vee I=\one_{[0,1]}$
    and the ideal generated by $\one_{[0,1]}$ in $X$ is $X$.
    Yet $X$ is not complete for $\|{\cdot }\|_{\one_{[0,1]}}$. Indeed,
    this norm is precisely the uniform norm, and $X$ is not closed
    in the uniform norm by the Stone--Weierstrass theorem.
\end{example}

The final example is a positive one: it shows how
\cref{cor:conditions} can be used in practice to prove that a
non-uniformly complete lattice-ordered algebra with unit admits a
polynomial growth continuous function calculus. This example is a
continuation of \cite[Proposition 3.1]{laustsen_troitsky2020}.

\begin{example}
    Let $K$ be a compact Hausdorff space of infinite cardinality, and
    let $s_0\in K$ be an accumulation point of a countably infinite
    subset of $K$. (For instance, $K=[0,1]$ and $s_0=0$.) Let
    \[
    X=\{\, f\in C(K) : f\text{ is constant on some neighborhood of
    }s_0 \, \}.
    \]
    It was shown in \cite[Proposition 3.1]{laustsen_troitsky2020} that
    this set is a proper and dense sublattice of $C(K)$ that is not
    uniformly complete. It is easy to check that it is a
    subalgebra and that $\one_K \in X$. Therefore, $X$ is an Archimedean
    \falg\ with identity. We will show that it admits a polynomial
    growth continuous function calculus by checking the condition of
    \cref{cor:conditions}.

    Let $f_1,\ldots ,f_n \in X$ and let $U$ be a neighborhood of
    $s_0$ in which $f_1|_U,\ldots ,f_n|_{U}$ are all constant. Let
    $c=\max\{\|f_1\|_\infty ,\ldots ,\|f_n\|_\infty ,1\}$, and let
    \[
    Y=\{\, f \in C(K) : f|_U\text{ is constant} \, \}.
    \]
    Then $e=c\one_K$ satisfies $e\ge \one_K \vee |f_1|\vee \cdots \vee
    |f_n|$ and $f_1,\ldots ,f_n \in Y$. Note that $Y$ is a closed
    sublattice-algebra of $C(K)$. Fixed $m \in \N$,
    $I_{e^{m}}=X$ and $\|{\cdot }\|_{e^{m}}$ is the uniform norm;
    it follows directly that
    $\|{\cdot }\|_{e^{m}}$ is complete on $I_{e^{m}}\cap Y$. Hence,
    even though $X$ is not uniformly complete, it admits a polynomial
    growth continuous function calculus.
\end{example}

\section{Vector spaces with polynomial growth continuous function
calculus}\label{sec:final}

Our final result is an extension of \cite[Theorem
1.4]{laustsen_troitsky2020}. It states that if a
vector space admits a sufficiently strong polynomial growth
continuous function calculus without constants, then it is automatically a commutative \falg.

\begin{thm}
    Let $X$ be a vector space and suppose that, for each $n \in \N$
    and each $\bm x = (x_1,\ldots ,x_n)\in X^{n}$, there is a linear
    map $\Psx\colon PG_n^{h}\to X$ which satisfies \eqref{eq:proj} and
    \eqref{eq:compo}. Then $X$
    can be endowed with the structure of a commutative \falg\ that
    admits a polynomial growth continuous function
    calculus without constants.
\end{thm}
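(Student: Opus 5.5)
We follow the strategy of \cite[Theorem 1.4]{laustsen_troitsky2020}: first define the lattice and algebra operations on $X$ through the calculus, then use \eqref{eq:compo} to transfer identities from the function algebras $PG_m^{h}$ to $X$. For $x,y\in X$ we set
\[
x\vee y=\Psi_{(x,y)}(\pi_1\vee\pi_2),\qquad x\wedge y=\Psi_{(x,y)}(\pi_1\wedge\pi_2),\qquad xy=\Psi_{(x,y)}(\pi_1\pi_2).
\]
All three functions lie in $PG_2^{h}$; in particular $(\pi_1\pi_2)_h=0$, since $(t x)(t y)/t\to 0$ uniformly on bounded sets.

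The first step is a \emph{consistency principle}. If $x_1,\dots,x_n\in X$, $f_1,\dots,f_m\in PG_n^{h}$ and $g\in PG_m^{h}$, then $g\circ(f_1\times\cdots\times f_m)\in PG_n^{h}$, and \eqref{eq:compo} gives
\[
\Psi_{(\Psx(f_1),\dots,\Psx(f_m))}(g)=\Psx\bigl(g\circ(f_1\times\cdots\times f_m)\bigr).
\]
Membership in $PG_n^{h}$ holds because polynomial growth is preserved under composition, as in the proof of \cref{cor:conditions}. The existence of the homogeneous part should follow from uniform convergence of $f_i(tx)/t$ on bounded sets together with the uniform continuity of $g$ and the rescaling $g(ts)/t$ on compacts; we verify this carefully, since it is the only genuinely analytic point. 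A special case is the choice $f_i=\pi_{k_i}$, which shows that the definitions are independent of how many extra variables are carried along.

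From consistency we obtain that $\Psx(h)$ is computed pointwise. If $h=u\circ(f_1,f_2)$ with $u\in\{\vee,\wedge,\cdot,+\}$ or $u=\lambda\pi_1$, then $\Psx(h)$ equals $u$ applied in $X$ to $\Psx(f_1),\Psx(f_2)$. For the vector space operations we also need $\Psi_{(x,y)}(\pi_1+\pi_2)=x+y$ and $\Psi_x(\lambda\pi_1)=\lambda x$; these follow from linearity of $\Psi$ and \eqref{eq:proj}. Consequently, each $\Psx\colon PG_n^{h}\to X$ preserves $\vee$, $\wedge$, products and linear combinations.

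The second step is to verify the axioms of a commutative \falg. Every required identity has the form $p(x_1,\dots,x_k)=q(x_1,\dots,x_k)$ for terms $p,q$ built from these operations. By the pointwise principle, both sides equal $\Psi_{\bm x}$ applied to the corresponding functions, and these functions coincide in $PG_k^{h}$. This covers the lattice axioms, distributivity, associativity and commutativity of the product, and translation invariance $(x\vee y)+z=(x+z)\vee(y+z)$. For positivity of products and the $f$-property we need the order defined by $x\ge0\iff x=x\vee0$; we rewrite it as the identity $x\vee 0=x$. Then $x,y\ge0$ gives $x=\Psi_{(x,y)}(\pi_1^+)$ and $y=\Psi_{(x,y)}(\pi_2^+)$, and
\[
xy\vee0=\Psi_{(x,y)}\bigl(\pi_1^+\pi_2^+\vee0\bigr)=\Psi_{(x,y)}(\pi_1^+\pi_2^+)=xy .
\]
The $f$-algebra condition, $x\wedge y=0$ and $z\ge0$ imply $(zx)\wedge y=0$, is handled in the same way as in \cref{prop:admitPG}. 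Write $x=\Psi(\pi_1^+-\pi_1^+\wedge\pi_2^+)$, $y=\Psi(\pi_2^+-\pi_1^+\wedge\pi_2^+)$ and $z=\Psi(\pi_3^+)$, all with respect to $(x,y,z)$; this uses $x\wedge y=0$ and $x,y,z\ge0$. We then apply the corresponding identity in $PG_3^{h}$.

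We also need the Archimedean property, which is implicit in calling $X$ an \falg. If $0\le nx\le y$ for all $n$, then $x=\Psi_{(x,y)}(\pi_1^+\wedge(\pi_2^+/n))$ for every $n$, so $\|x\|$-type estimates are not available and a different route is needed. We plan to use the one-variable calculus: the map $g\mapsto\Psi_{(x,y)}(g)$ is a lattice homomorphism from $PG_2^{h}$ whose image contains $x$ and $y$. We then argue as in \cite{laustsen_troitsky2020}, where Archimedeanity follows because the image of a uniformly complete Archimedean $PG_2^{h}$ under a lattice homomorphism is a quotient by an ideal that is uniformly closed. If this turns out to be delicate, we expect it to be the main obstacle, together with the composition lemma for $PG^{h}$. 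Finally, $X$ admits the calculus without constants by construction, since each $\Psx$ is a lattice-algebra homomorphism satisfying \eqref{eq:proj}.
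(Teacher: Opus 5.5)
Your main line is correct. The problem is the paragraph on the Archimedean property: it contains a step that fails, and the property it aims at is false, so it cannot be repaired.

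The hypotheses do not force $X$ to be Archimedean. Let $X=C[0,1]/J$, where $J$ is the ideal of functions vanishing on some neighbourhood of $0$. For $x_i=[a_i]$ set $\Psx(g)=[g\circ(a_1\times\cdots\times a_n)]$.
\begin{itemize}
\item This map is independent of the representatives, is linear, and satisfies \eqref{eq:proj} and \eqref{eq:compo}.
\item The structure it induces is the quotient structure. With $u(t)=t$ one has $0\le k[u^2]\le[u]$ for every $k\in\N$, although $[u^2]\neq0$.
\item Your claim that $\ker\Psi_{(x,y)}$ is uniformly closed breaks down here. Take $x=[u^2]$ and $y=[u]$. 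The functions $(\pi_1-\tfrac1k|\pi_2|)^+\in H_2\subseteq PG_2^h$ lie in the kernel, since they vanish at $(t^2,t)$ for $0\le t\le 1/k$.
\item These functions converge to $\pi_1^+$ relatively uniformly with regulator $|\pi_2|$. Yet $\Psi_{(x,y)}(\pi_1^+)=x\neq0$. This is the same phenomenon as your identity $x=\Psi_{(x,y)}(\pi_1^+\wedge(\pi_2^+/n))$.
\end{itemize}
The paper's proof makes no Archimedean claim. Neither \cite[Theorem 1.4]{laustsen_troitsky2020} nor Birkhoff's criterion \cite[Lemma 3]{birkhoff1967} provides it. So ``\falg'' in this theorem has to be read in the Birkhoff--Pierce sense, and you should simply delete that paragraph.

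With that paragraph removed, you have a correct and more self-contained version of the paper's argument.
\begin{itemize}
\item The paper gets the vector lattice structure by quoting \cite[Theorem 1.4]{laustsen_troitsky2020}. It checks the algebra axioms and positivity of products via \eqref{eq:compo} exactly as you do. It verifies the $f$-property through Birkhoff's identity $(x_1\vee0)\wedge[((-x_1)\vee0)(x_2\vee0)]=0$.
\item You instead prove once that every $\Psx$ preserves $\vee$, $\wedge$, products and linear combinations. You then obtain the lattice and Riesz-space axioms as transferred identities. Include $\lambda(x\vee y)=\lambda x\vee\lambda y$ for $\lambda\ge0$ in that list, not just translation invariance.
\item You get the $f$-property by the disjointness argument of \cref{prop:admitPG}. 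It works verbatim once $\Psi_{(x,y,z)}$ is known to be a lattice-algebra homomorphism.
\end{itemize}
Your version avoids both external citations; the paper's is shorter. Your composition lemma is also correct: $g\circ(f_1\times\cdots\times f_m)\in PG_n^h$, with homogeneous part $g_h\circ((f_1)_h\times\cdots\times(f_m)_h)$. The paper tacitly assumes this inside its hypothesis \eqref{eq:compo}.
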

\begin{proof}
    For $n \in \N$ and $f \in PG_n^{h}$, denote $\Psx(f)$ by $f(\bm x)$. In this
    notation, the linearity of $\Psx$ reads
    \[
        (f+\lambda g)(\bm x)=f(\bm x)+\lambda g(\bm x)
    \]
    for all $f,g \in PG_n^{h}$ and $\lambda \in \R$. Condition
    \eqref{eq:proj}
    becomes $\pi _i(\bm x)=x_i$, while \eqref{eq:compo} has the form
    \[
    g(f_1(\bm x),\ldots ,f_m(\bm x))=(g\circ (f_1\times \cdots \times
    f_m))(\bm x).
    \]

    Consider the function of $PG_2$:
    \[
    \begin{array}{cccc}
    \sigma \colon& \R^2 & \longrightarrow & \R \\
            & (t_1,t_2) & \longmapsto & t_1\vee t_2 \\
    \end{array}
    \]
    and define a partial order $\le $ on $X$ by setting $x_1\le x_2$
    if and only if $\sigma (x_1,x_2)=x_2$. It is shown in
    \cite[Theorem 1.4]{laustsen_troitsky2020} that this order makes
    $X$ into a vector lattice. Moreover, the supremum of $x_1,x_2 \in
    X$ in this order is given by $\sigma (x_1,x_2)$.

    Consider the function of $PG_2$:
    \[
    \begin{array}{cccc}
    \rho \colon& \R^2 & \longrightarrow & \R \\
            & (t_1,t_2) & \longmapsto & t_1t_2 \\
    \end{array}.
    \]
    Define a product on $X$ by
    \[
    \begin{array}{cccc}
            & X\times X & \longrightarrow & X \\
            & (x_1,x_2) & \longmapsto & \rho (x_1,x_2) \\
    \end{array}.
    \]
    We claim that $X$, with this product, is a commutative real
    algebra. Let us check the required axioms.

    \textsc{Commutativity.} Since
    the product in $\R$ is commutative, $\rho (t_1,t_2)=\rho
    (t_2,t_1)$ holds for all $t_1,t_2 \in \R$. In other words, the identity $\rho
    \circ \pi _1\times \pi _2=\rho \circ \pi _2\times \pi _1$ holds in
    $PG_2$. Using \eqref{eq:proj} and \eqref{eq:compo}, the following holds for all
    $x_1,x_2\in X$:
    \[
        (\rho \circ \pi _1\times \pi _2)(x_1,x_2)=\rho (\pi
        _1(x_1,x_2),\pi _2(x_1,x_2))=\rho (x_1,x_2).
    \]
    Similarly, $(\rho \circ \pi _2\times \pi _1)(x_1,x_2)=\rho
    (x_2,x_1)$. Hence $\rho(x_1,x_2)=\rho (x_2,x_1)$.

    \textsc{Associativity.} The associativity of $\R$
    means that $\rho (\rho (t_1,t_2),t_3)=\rho (t_1,\rho (t_2,t_3))$
    for all $t_1,t_2,t_3\in \R$. This, in turn, implies that the
    following identity holds in $PG_3$:
    \[
        \rho \circ [(\rho \circ \pi _1\times \pi _2)\times \pi _3]=
        \rho \circ [\pi _1 \times (\rho \circ \pi _2\times \pi _3)].
    \]
    Applying this to $(x_1,x_2,x_3) \in X^{3}$ and simplifying
    with \eqref{eq:proj} and \eqref{eq:compo} yields $\rho (\rho (x_1,x_2),x_3)=\rho (x_1,\rho (x_2,x_3))$.

    \textsc{Distributivity.} Distributivity in $\R$ translates into
    the identity
    \[
        \rho \circ [\pi _1\times (\pi _2+\pi _3)]=\rho \circ \pi
        _1\times \pi _2+\rho \circ \pi _1\times \pi _3.
    \]
    Applying this to $(x_1,x_2,x_3) \in X^{3}$, using linearity and
applying \eqref{eq:proj} and \eqref{eq:compo} yields $\rho (x_1,x_2+x_3)=\rho (x_1,x_2)+\rho (x_1,x_3)$.
    Distributivity on the right follows from commutativity and
    distributivity on the left.

    \textsc{Associativity with scalars.} Fixed $\lambda ,\mu \in \R$,
    associativity and commutativity in $\R$ imply $\rho (\lambda t_1,\mu t_2)=\lambda
    \mu \rho (t_1,t_2)$ for all $t_1,t_2 \in \R$. This means that
    \[
    \rho \circ (\lambda \pi _1) \times (\mu \pi _2)=(\lambda \mu) (\rho
    \circ \pi _1\times \pi _2).
    \]
    Evaluating at $(x_1,x_2)\in X^2$ and using linearity and \eqref{eq:proj} and \eqref{eq:compo}
    gives $\rho (\lambda x_1,\mu x_2)=\lambda \mu \rho (x_1,x_2)$.

    Hence $X$ is a real algebra when equipped with the product
    $x_1x_2=\rho (x_1,x_2)$. Next we show that it is also a
    lattice-ordered algebra. For this we have to check that, if $x_1,x_2 \in
    X_+$, then $x_1x_2\in X_+$. Since in $\R$ the product of positive
    elements is positive, the identity
    \[
        [(t_1\vee 0)(t_2\vee 0)]\vee 0=(t_1\vee 0)(t_2\vee 0)
    \]
    holds for all $t_1,t_2 \in \R$. This implies that
    \[
    \sigma \circ [\rho \circ (\sigma \circ \pi _1\times \zero)\times
    (\sigma \circ \pi _2\times \zero)]=\rho \circ [(\sigma \circ \pi
    _1\times \zero)\times (\sigma \circ \pi _2\times \zero)],
    \]
    where $\zero \colon \R^{2}\to \R$ denotes the constant zero
    function. Linearity of $\Psx$ implies that $\zero(x_1,x_2)=0$ for
    all $(x_1,x_2)\in X^2$. Therefore the evaluation of the previous
    identity at $(x_1,x_2)$ yields
    \[
        \sigma [\rho (\sigma (x_1,0),\sigma (x_2,0)),0]=\rho [\sigma
        (x_1,0),\sigma (x_2,0)].
    \]
    This equation holds for all $x_1,x_2 \in X$. When $x_1,x_2\ge 0$
    (that is, when $\sigma (x_i,0)=x_i$ for $i=1,2$) this equation
    becomes
    \[
        \sigma [\rho (x_1,x_2),0]=\rho (x_1,x_2).
    \]
    This is, by definition, the same as $x_1x_2=\rho (x_1,x_2)\ge 0$.
    Hence $X$ is a lattice-ordered algebra.

    To show that $X$ is an \falg, we will use the following fact due
    to G.\ Birkhoff \cite[Lemma 3]{birkhoff1967}: the lattice-ordered algebra $X$ is an
    \falg\ if and only if the equalities
    \[
        (x_1\vee 0)\wedge [((-x_1)\vee 0)(x_2\vee 0)]=0=
        (x_1\vee 0)\wedge [(x_2\vee 0)((-x_1)\vee 0)]
    \]
    hold for all $(x_1,x_2)\in X^2$. Since $X$ is commutative, we
    need only check the identity on the left. Since $\R$ is an \falg,
    \[
        (t_1\vee 0)\wedge [((-t_1)\vee 0)(t_2\vee 0)]=0
    \]
    holds for all $t_1,t_2 \in \R$; let us rewrite it as
    \[
        (-(t_1\vee 0))\vee  [-((-t_1)\vee 0)(t_2\vee 0)]=0
    \]
    so that only suprema appear in the formula. This implies that
    \[
        \sigma \circ [[-\sigma \circ (\pi _1\times \zero)]\times [-\rho
        \circ ((\sigma \circ ((-\pi _1)\times \zero))\times (\sigma \circ
        (\pi _2\times \zero)))]]=\zero.
    \]
    Applying this identity to $(x_1,x_2)\in X^2$ yields
    \[
    \sigma (-\sigma (x_1,0),-\rho (\sigma (-x_1,0),\sigma (x_2,0)))=0.
    \]
    By \cite[Lemma 3]{birkhoff1967}, $X$ is a commutative \falg.

    Finally, to show that $X$ admits a polynomial growth
    continuous function calculus without constants, it suffices to check that
    $\Psx\colon PG_n^{h}\to X$ is a lattice-algebra homomorphism for
    all $\bm x=(x_1,\ldots ,x_n) \in X^{n}$. To make the equations
    clearer, denote $\sigma (x_1,x_2)$ by $x_1\vee x_2$ and $\rho
    (x_1,x_2)$ by $x_1x_2$. Then for all $f, g \in PG_n^{h}$:
    \[
    \Psx(f\vee g)=\Psx(\sigma \circ (f\times g))=(\sigma \circ (f\times g))(\bm x)=\sigma (f(\bm
    x),g(\bm x))=\Psx(f)\vee \Psx(g),
    \]
    and also
    \[
    \Psx(fg)=\Psx(\rho \circ (f\times g))=(\rho \circ (f\times g))(\bm
    x)=\rho (f(\bm x),g(\bm x))=\Psx(f)\Psx(g).\qedhere
    \]
\end{proof}

\printbibliography

\end{document}